\definecolor{mno}{rgb}{0.5,0.1,0.5}
\newcommand{\R}{\mathds R}
\newcommand{\Ss}{\mathds S}
\newcommand{\Pp}{\mathds P}
\newcommand{\Ee}{\mathds E}
\newcommand{\I}{\mathds 1}
\def\<{\langle}
\def\>{\rangle}
\newtheorem{theorem}{Theorem}[section]
\newtheorem{lemma}[theorem]{Lemma}
\newtheorem{proposition}[theorem]{Proposition}
\newtheorem{corollary}[theorem]{Corollary}
\theoremstyle{definition}
\newtheorem{remark}[theorem]{Remark}
\begin{document}
\allowdisplaybreaks
\title[Spatial regularity of L\'{e}vy type operators]
{\bfseries Spatial regularity of semigroups generated by L\'{e}vy type operators}

\author{Mingjie Liang \qquad Jian Wang}
\thanks{\emph{M.\ Liang:}
College of Mathematics and Informatics, Fujian Normal University,
350007 Fuzhou, P.R. China. \texttt{liangmingjie@aliyun.com}}
\thanks{\emph{J.\ Wang:}
College of Mathematics and Informatics \& Fujian Key Laboratory of Mathematical Analysis and Applications (FJKLMAA), Fujian Normal University, 350007 Fuzhou, P.R. China. \texttt{jianwang@fjnu.edu.cn}}
\date{}

\begin{abstract}We apply the probabilistic coupling approach to establish the spatial regularity of semigroups associated with L\'{e}vy type operators, by assuming that the martingale problem of L\'{e}vy type operators is well posed. In particular, we can prove the Lipschitz continuity of the semigroups under H\"{o}lder continuity of coefficients, even when the L\'evy kernel corresponding to L\'{e}vy type operators is singular.

\medskip

\noindent\textbf{Keywords:} L\'{e}vy type operator; coupling; spatial regularity; martingale problem

\medskip

\noindent \textbf{MSC 2010:} 60J25; 60J75.
\end{abstract}

\maketitle
\allowdisplaybreaks

\section{Introduction and Main Results}\label{section1}
We consider the following L\'evy type operator
\begin{equation}\label{levytype}Lf(x)=\int\!\!\Big(f(x+z)-f(x)-\langle\nabla f(x), z\rangle\I_{B(0,1)}(z)\Big)c(x,z)\,\nu(dz),\end{equation}
where $\nu$ is a L\'evy measure, i.e., $\nu(\{0\})=0$ and $\int
(1\wedge |z|^2)\,\nu(dz)<\infty$, and $(x,z)\mapsto c(x,z)$ is a
continuous function such that $c(x,z)\in(c_*,c^*)$ for some
constants $0<c_*\le c^*<\infty$. The operator $L$ is a non-local
version of the classic second order elliptic operator with
non-divergence form, and it has been attracted a lot of attentions
in the community of analysis and PDEs, see, e.g.\ \cite{CS, CS1}. In
the probability theory, $L$ is corresponding to an infinitesimal
generator of a large class of Feller processes, see the monographs
\cite{BSW, Ja}.  The operator $L$ is also connected with the
following pure jump process on $\R^d$:
\begin{equation}\label{jumps}\begin{split}dJ_t=&\int_0^\infty \int_{\{|z|\le 1\}}
z\I_{[0,c(J_{t-},z)]}(r)\,\widetilde N(dz,dr,dt)\\
&+\int_0^\infty \int_{\{|z|> 1\}} z\I_{[0,c(J_{t-},z)]}(r)\,
N(dz,dr,dt),\quad t\ge0,\end{split}\end{equation} where
$N(dz,dr,dt)$ is a Poisson random measure on
$\R^d\times[0,\infty)\times[0,\infty)$ with intensity measure
$\nu(dz)\,dr\,dt$, and $\widetilde
N(dz,dr,dt)=N(dz,dr,dt)-\nu(dz)\,dr\,dt$ is the compensated Poisson
random measure. See \cite{KU, KP, X, XX} for more details. The pure
jump process of the form as $(J_t)_{t\ge0}$ has been found very
useful in applications; for instance, $(J_t)_{t\ge0}$ plays a
crucial role as the control when proving Freidlin-Wentzell type
large deviation for stochastic differential equations with jumps via
weak convergence approach, see e.g. \cite{BCD}.

The aim of this paper is to establish the spatial regularity of semigroups associated with the operator $L$ given by \eqref{levytype}.
We will adopt the probabilistic coupling approach, which recently has been extensively studied in \cite{MW, LW14, Lwcw, Wang15}. To study analytic properties for L\'evy type operators via probabilistic method, as one of the standing assumptions,
the existence of a strong Markov process associated with L\'evy type operators was assumed. For example, see \cite{BK1, BK2, BL, LW14,Wang15}, where the strong Markov property played an important role. Similarly, throughout this paper we shall assume that the martingale problem for $(L,C_b^2(\R^d))$ is well posed (see Section \ref{section2} below for its definition). In particular, there is a strong Markov process $X:=((X_t)_{t\ge0}, (\Pp^x)_{x\in\R^d})$, whose generator is just the L\'evy type operator $L$. Below, for any $f\in B_b(\R^d)$ (the set of
bounded measurable functions on $\R^d$), let
$$P_tf(x)=\Ee^xf(X_t),\quad x\in \R^d,t\ge0$$ be the semigroups corresponding to the operator $L$.

 \ \

To state the contribution of our paper, we consider the following L\'evy measure partly motivated by \cite{KRS}. (Actually, the paper \cite{KRS} treated L\'evy kernel case but with a fixed order, see \cite[(1.2)]{KRS}.)
Suppose that there are constants $c_1,c_2>0$ and $0<\alpha_1\le \alpha_2<2$ such that
\begin{equation}\label{e:mee}\frac{c_1}{|z|^{d+\alpha_1}}\I_{V_\xi}(z)\,dz\le \nu(dz)\le \frac{c_2}{|z|^{d+\alpha_2}}\,dz,\end{equation}  where \begin{equation}\label{e:mea}V_\xi=\{z\in \R^d: |z|\le 1\,\,\text{and}\,\, \langle z,\xi\rangle \ge \delta |z|\}\end{equation} with $\xi \in \Ss^{d-1}$ and constant $\delta\in (0,1)$.

\begin{theorem}\label{main} Under the assumption \eqref{e:mee}, we define for any $r>0$,
$$
  w(r)=
    \begin{cases}
    \sup\limits_{x,y\in \R^d: |x-y|=r}\displaystyle\int_{\{|z|\le 1\}}|z|^2 |c(x,z)-c(y,z)|\,\nu(dz), & \alpha_2\in [1,2);\\
     \sup\limits_{x,y\in \R^d: |x-y|=r}\displaystyle\int_{\{|z|\le 1\}}|z| |c(x,z)-c(y,z)|\,\nu(dz), & \alpha_2\in (0,1).
    \end{cases}
 $$ Then the following statements hold.
\begin{itemize}
\item[(1)] If $\alpha_1\in (1,2)$ and
\begin{equation}\label{e:ffeesss}\lim_{r\to0} w(r) r^{\alpha_1-2}\log^{1+\theta}(1/r)=0\end{equation} for some $\theta>0$,
then there exists a constant $C>0$ such that for all $f\in B_b(\R^d)$ and $t>0$,
$$
  \sup_{x\neq y}\frac{{|P_t f(x)-P_t f(y)|}}{|x-y|}
  \le C\|f\|_\infty {(t\wedge1)^{-1/\alpha_1}} \bigg[\log^{(1+\theta)/\alpha_1}\left(\frac{1}{t\wedge e}\right)\bigg].
  $$
\item[(2)] If $\alpha_1\in [1,2)$ and
\begin{equation}\label{e:ffeesss11}\lim_{r\to0} w(r) r^{\alpha_1-2}\log(1/r)=0,\end{equation}
then, for any $\theta>\I_{\{\alpha_1=1\}}$, there exists a constant
$C>0$ such that for all $f\in B_b(\R^d)$ and $t>0$,
$$
  \sup_{x\neq y}\frac{{|P_t f(x)-P_t f(y)|}}{|x-y|\big|\log
  |x-y|\big|^{\theta}}
  \le C\|f\|_\infty {(t\wedge1)^{-1/\alpha_1}} \bigg[\log^{-\theta+(1/\alpha_1)}\left(\frac{1}{t\wedge e}\right)\bigg].
  $$

\item[(3)] If $\alpha_2\in (0,1)$ and
\begin{equation}\label{e:ffeesss1}\lim_{r\to0} w(r) r^{\alpha_1-1}=0,\end{equation}
then for any $\theta\in (0,\alpha_1)$, there exists a constant $C>0$
such that for all $f\in B_b(\R^d)$ and $t>0$,
$$
  \sup_{x\neq y}\frac{{|P_t f(x)-P_t f(y)|}}{|x-y|^\theta}
  \le C\|f\|_\infty{(t\wedge1)^{-\theta/\alpha_1}}.
  $$
\end{itemize}\end{theorem}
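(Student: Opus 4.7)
My plan is to use the probabilistic coupling approach that the paper advertises: construct a Markov coupling $(X_t, Y_t)$ with $X_0 = x$, $Y_0 = y$ whose marginals each solve the martingale problem for $(L, C_b^2(\Rd))$, and then convert coupling estimates into modulus-of-continuity estimates on $P_tf$. The first step is to decompose $\nu = \mu + \tilde\nu$ with $\mu(dz) = \frac{c_1}{|z|^{d+\alpha_1}}\I_{V_\xi}(z)\,dz$, and to use $\mu$ as the coupling driver in a \emph{refined basic coupling} in the spirit of \cite{LW14, Wang15}. For $X_{t-} = p$, $Y_{t-} = q$ with $|p - q|$ small, the overlap between the kernel $c(p,z)\,\mu(dz)$ and the shifted kernel $c(q,z + (p - q))\,\mu(dz + (p - q))$ is of order $|p-q|^{-\alpha_1}$, thanks to the lower bound of $\mu$ inside the cone $V_\xi$. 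Driving both processes simultaneously from this common mass to the point $p + z$ produces coalescence at a rate comparable to $|p-q|^{-\alpha_1}$. The remainder of $\nu$ is then coupled synchronously, and the mismatch between $c(p,\cdot)$ and $c(q,\cdot)$ contributes an error exactly quantified by $w(|p - q|)$, with the $|z|^2$ and $|z|$ moments arising in the two regimes $\alpha_2 \in [1,2)$ and $\alpha_2 \in (0,1)$ respectively, reflecting whether the cutoff drift $\langle\nabla f(x),z\rangle\I_{B(0,1)}(z)$ needs compensation.

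Once the coupling is fixed, the elementary bound $|P_tf(x) - P_tf(y)| \le 2\|f\|_\infty\,\Pp(X_t \neq Y_t)$ reduces the theorem to a tail estimate for the coupling event. I would then apply the coupling generator $\tilde L$ to a suitably concave test function $\phi$ of the inter-process distance to obtain a differential inequality of the form
\[\frac{d}{dt}\Ee^{x,y}\phi(|X_t - Y_t|) \le -c_0\,\Ee^{x,y}\!\Big[|X_t - Y_t|^{-\alpha_1}\phi(|X_t - Y_t|)\Big] + C\,\Ee^{x,y}\!\Big[w(|X_t - Y_t|)\,\psi(|X_t - Y_t|)\Big],\]
where $\psi$ records derivatives of $\phi$ of the order dictated by the regime. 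A Gronwall-type argument on this inequality, followed by optimization over the scale of $|x-y|$, yields the stated bounds.

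The case-by-case choice is then: $\phi(r) = r$ for part (1); $\phi(r) = r|\log r|^\theta$ for part (2); and $\phi(r) = r^\theta$ with $\theta \in (0, \alpha_1)$ for part (3). Conditions \eqref{e:ffeesss}, \eqref{e:ffeesss11} and \eqref{e:ffeesss1} are tailored precisely so that the error $C\,w(r)\psi(r)$ is absorbed by the gain $c_0\,r^{-\alpha_1}\phi(r)$ in a neighborhood of zero, and the time factor $(t \wedge 1)^{-1/\alpha_1}$ emerges from the natural scaling of the coupling rate against the stable-type spreading produced by $\mu$. The main obstacle, in my view, is designing the coupling so that the generator estimate has both the correct sign and a clean error term. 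The delicate points are (i) the drift correction near $|z|=1$ when $\alpha_2 \in [1,2)$, which must be absorbed into $w$ through the $|z|^2$ moment, and (ii) the fact that the cone $V_\xi$ is fixed while $X_{t-} - Y_{t-}$ varies in direction, forcing the coupling kernel to be adapted at the cost of $\delta$-dependent constants. The logarithmic corrections in parts (1) and (2) arise precisely because at the borderline $\alpha_1 = 1$, or when $w$ is only log-vanishing, $\phi(r) = r$ fails to strictly satisfy the differential inequality, which forces the log twist of $\phi$ in part (2).
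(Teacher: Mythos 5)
Your overall strategy is the paper's strategy: a refined basic coupling driven by the overlap of $\nu$ with its shift (lower-bounded on the cone $V_\xi$, which gives a coupling intensity of order $r^{-\alpha_1}$ at distance $r$), synchronous coupling of the rest with the mismatch of $c(x,\cdot)$ and $c(y,\cdot)$ quantified by $w$, a generator estimate for a concave function of the distance, and then $|P_tf(x)-P_tf(y)|\le 2\|f\|_\infty\,\Pp(T>t)$ plus optimization. The gap is in the key quantitative step, namely the form of your differential inequality. The refined basic coupling moves the distance $r$ to $0$ (or $r-\kappa$) and to $2r$ (or $r+\kappa$) with \emph{equal} intensities, so its contribution to $\widetilde L\phi(|x-y|)$ is a symmetric second-order difference, essentially $\tfrac12 J(r)\big(\phi(2r)+\phi(0)-2\phi(r)\big)$ for small $r$, which is controlled by $J(r)\,r^2\phi''(2r)$ — not by $-c_0 r^{-\alpha_1}\phi(r)$. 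Consequently your choice $\phi(r)=r$ in part (1) gives \emph{zero} gain ($\phi(2r)-2\phi(r)=0$), and the claimed inequality with gain $-c_0 r^{-\alpha_1}\phi(r)$ is false for this coupling; there is nothing to absorb the error $w(r)\phi'(r)/r$, so part (1) does not follow. This is precisely why the paper takes the strictly concave perturbation $\phi(r)=r\big(1-\log^{-\theta}(1/r)\big)$, whose second derivative yields a gain of order $r^{1-\alpha_1}\log^{-(1+\theta)}(1/r)$; the hypothesis \eqref{e:ffeesss} carries the factor $\log^{1+\theta}(1/r)$, and the conclusion carries $\log^{(1+\theta)/\alpha_1}(1/(t\wedge e))$, exactly because of this weaker gain. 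A Lipschitz bound with a clean $t^{-1/\alpha_1}$ (which is what $\phi(r)=r$ would be aiming at) is not what the theorem asserts and is not reachable by this coupling.

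The same overstatement affects your part (2): for $\phi(r)=r\log^{\theta}(1/r)$ one has $\phi(2r)-2\phi(r)\asymp -r\log^{\theta-1}(1/r)=-\phi(r)/\log(1/r)$, i.e.\ the gain is a logarithm smaller than your $-c_0r^{-\alpha_1}\phi(r)$; matching this true gain against $w(r)\phi'(r)/r$ is what forces hypothesis \eqref{e:ffeesss11} to contain the factor $\log(1/r)$ (your inequality would only require $w(r)r^{\alpha_1-2}\to0$, a telltale mismatch with the stated assumptions). Part (3) is fine in this respect, since $\phi(r)=r^\theta$ satisfies $\phi(2r)-2\phi(r)=-(2-2^\theta)\phi(r)$. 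Two further points you gloss over, both handled in the paper: the coupling operator must be well defined with a coefficient depending on \emph{both} endpoints, which requires replacing $c(x,z)\wedge c(y,z)$ by the symmetrized minimum $c(x,z)\wedge c(y,z)\wedge c(x,z-u)\wedge c(y,z-u)$ so that the marginal property survives the change of variables $z\mapsto z-u$; and the passage from the generator inequality to the bound on $\Pp(T>t)$ (your Gronwall step) needs the coupling process to exist and be non-explosive, which the paper gets from well-posedness of the martingale problem together with Stroock's existence theorem, followed by a maximal-inequality argument giving the bound $\inf_{\varepsilon}\big[\psi(\varepsilon)^{-1}+(t\lambda_\psi(\varepsilon))^{-1}\big]$. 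With the corrected test function in part (1) and the corrected gain in part (2), your outline would align with the paper's proof.
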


Note that, the continuity assumptions on $w(r)$ in the theorem above
are weaker than those on the function $x\mapsto c(x,z)$ (uniformly
with respect to $z$). The latter was used to study the H\"{o}lder
continuity of solutions to a class of second order non-linear
elliptic integro-differential equations in \cite[Theorem 3.1]{BCI}.
See also \cite[Section 4.2]{BCI2} for more details. We also mention
that similar assumptions (but a little stronger) on $w(r)$ have been
adopted to study the pathwise uniqueness of solution to stochastic
differential equations driven by the pure jump process
$(J_t)_{t\ge0}$ of the form \eqref{jumps}, see \cite[Theorem
3.1]{KU}, \cite[Theorem 1.2]{X} or \cite[Theorem 2.1]{XX}.

\subsection{Applications} A Borel measurable function $u$ on $\R^d$
is called harmonic with respect to $L$, if $P_tu(x)=u(x)$ for all
$x\in \R^d$ and $t>0$. The following result is a direct consequence
of Theorem \ref{main}.
\begin{corollary}\label{C:cor1} Under the setting of Theorem $\ref{main}$, we have the following three statements.
\begin{itemize}
\item[(1)] If assumptions in the first assertion of Theorem $\ref{main}$ hold, then
any bounded measurable function $u$ is Lipschitz continuous.

\item[(2)] If assumptions in the second assertion of Theorem $\ref{main}$ hold, then, for any
$\theta>\I_{\{\alpha_1=1\}}$, any bounded measurable function $u$ is $r\log^\theta |r|$-order
continuous.

\item[(3)] If assumptions in the third assertion of Theorem $\ref{main}$ hold, then for any $\varepsilon>0$,
any bounded measurable function $u$ is
$(\alpha_1-\varepsilon)$-H\"{o}lder continuous.
\end{itemize}\end{corollary}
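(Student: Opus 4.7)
The plan is to derive the corollary directly from Theorem \ref{main} by exploiting the defining property of harmonic functions: if $u$ is bounded and harmonic with respect to $L$, then by definition $u(x) = P_t u(x)$ for every $x \in \R^d$ and every $t > 0$. In particular $u \in B_b(\R^d)$, so $u$ itself inherits whatever spatial regularity the theorem establishes for the family $\{P_t u\}_{t > 0}$; the whole argument is just a substitution.

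Accordingly, I would fix a convenient time, say $t = 1$, and apply each of the three assertions of Theorem \ref{main} to $f = u$, using $P_1 u = u$. For (1), the estimate
$$\sup_{x \neq y} \frac{|u(x)-u(y)|}{|x-y|} = \sup_{x \neq y} \frac{|P_1 u(x) - P_1 u(y)|}{|x-y|} \le C \|u\|_\infty$$
delivers the Lipschitz conclusion. For (2), the analogous substitution yields
$$\sup_{x \neq y} \frac{|u(x)-u(y)|}{|x-y|\, \bigl|\log |x-y|\bigr|^{\theta}} \le C\|u\|_\infty,$$
which is exactly the claimed $r\log^\theta |r|$-order modulus of continuity. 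For (3), given $\varepsilon > 0$ I would set $\theta = \alpha_1 - \varepsilon \in (0,\alpha_1)$ and read off $(\alpha_1-\varepsilon)$-H\"older continuity from the corresponding bound at $t=1$.

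There is no substantive obstacle, since once the semigroup estimates in Theorem \ref{main} are granted, the passage to harmonic $u$ is immediate. The only things worth verifying are bookkeeping items: the suprema in Theorem \ref{main} are taken over all distinct $x, y \in \R^d$, so the resulting moduli of continuity for $u$ are global rather than merely local; the constant on the right-hand side at $t=1$ is finite because $(t\wedge 1)^{-1/\alpha_1}$ and $\log(1/(t\wedge e))$ are bounded at $t=1$; and the harmonicity identity $P_t u = u$ may be applied at this single value of $t$ without any further regularity assumptions on $u$ beyond boundedness and measurability. With these observations in place, the three statements of the corollary follow in one line each from the corresponding parts of Theorem \ref{main}.
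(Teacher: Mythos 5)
Your proposal is correct and is essentially identical to the paper's proof: the authors also write $u(x)-u(y)=P_1u(x)-P_1u(y)$ for a bounded harmonic $u$ and read off the three moduli of continuity from Theorem \ref{main} at $t=1$ (with $\theta=\alpha_1-\varepsilon$ in case (3)). No further comment is needed.
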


Next, we apply Theorem \ref{main} to prove the following Liouville
theorem. See \cite[Theorem 2.1]{RS} for the related discussion for
general symmetric $\alpha$-stable operators.

\begin{corollary}\label{C:cor2} Consider the setting of Theorem $\ref{main}$, and let $u$ be a harmonic function respect to the operator $L$. Suppose that there exists a constant $c>0$ such that for
all $r\ge1$, $$\|u\|_{L^\infty(B_r(0),dx)}\le c r^\beta,$$ where
$B_r(0)=\{z\in \R^d:|z|<r\}$ and $0\le \beta<\alpha_2$. If
assumptions in any assertion of Theorem $\ref{main}$ hold, then $u$
is a polynomial of degree at most $\lfloor \beta\rfloor$, where
$\lfloor x\rfloor$ denotes the integer part of $x$.
\end{corollary}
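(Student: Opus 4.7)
The plan is to argue that $u$ is a polynomial of degree at most $k:=\lfloor\beta\rfloor$ by verifying the classical Fr\'echet criterion that the $(k+1)$-fold finite difference $\Delta_h^{k+1}u(x):=\sum_{j=0}^{k+1}(-1)^{k+1-j}\binom{k+1}{j}u(x+jh)$ vanishes identically; the main technical ingredient will be a truncation argument combined with Theorem~\ref{main}, using the strict inequality $\beta<\alpha_2$.

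First I would justify the identity $u=P_tu$ in the polynomially growing setting. The upper bound in \eqref{e:mee} gives the large-jump tail $\nu(\{|z|>R\})\le CR^{-\alpha_2}$, so $(X_t)$ has finite $\gamma$-th moments for every $\gamma<\alpha_2$; combined with $|u|\le c(1+|\cdot|)^\beta$ and $\beta<\alpha_2$, this yields $\Ee^x|u(X_t)|<\infty$, hence $u(x)=P_tu(x)$ for all $x\in\R^d$ and $t>0$. Next, for each $R\ge 1$ pick a smooth cutoff $\phi_R$ equal to $1$ on $B_R$ and supported in $B_{2R}$, and set $u_R:=u\phi_R\in B_b(\R^d)$ with $\|u_R\|_\infty\le cR^\beta$. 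Applying Theorem~\ref{main} to $u_R$ and splitting $u=P_tu_R+P_t(u-u_R)$ gives, for $x,y\in B_{R/2}$,
\[
|u(x)-u(y)|\le CR^\beta(t\wedge1)^{-1/\alpha_1}|x-y|+CtR^{\beta-\alpha_2},
\]
with the obvious logarithmic corrections in the borderline cases, where the second summand controls $|P_t(u-u_R)(x,y)|$ via the tail estimate $\Pp^x(|X_t|>R)\le CtR^{-\alpha_2}$ and the growth hypothesis on $u$.

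To turn this into a bound on $\Delta_h^{k+1}u$, I would apply the decomposition at the shifted points $x+jh$, $0\le j\le k+1$, once $R\ge 2(|x|+(k+1)|h|)$ so that they all lie in $B_{R/2}$. A naive iteration of Theorem~\ref{main} via the semigroup identity $P_t=(P_{t/(k+1)})^{k+1}$ only yields a Lipschitz-order estimate, not the $|h|^{k+1}$-order one needed; to upgrade, I would extend the probabilistic coupling underlying Theorem~\ref{main} to a simultaneous coupling of $k+1$ trajectories started at the shifted points, and use a telescoping argument to produce
\[
|\Delta_h^{k+1}P_tu_R(x)|\le CR^\beta(t\wedge1)^{-(k+1)/\alpha_1}|h|^{k+1},
\]
while the tail contribution $\Delta_h^{k+1}P_t(u-u_R)(x)$ remains $O(tR^{\beta-\alpha_2})$. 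Fixing $t$ of unit order and letting $R\to\infty$ with $x,h$ fixed, both contributions vanish because $\beta<\alpha_2$ and $k+1>\beta$, so $\Delta_h^{k+1}u\equiv 0$, and Fr\'echet's theorem forces $u$ to be a polynomial of degree at most $k$.

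The principal obstacle is the lack of translation invariance of $L$: because $c(x,z)$ depends on $x$, finite differences of harmonic functions are not themselves harmonic, so Theorem~\ref{main} cannot be iterated directly on $\Delta_hu$ to gain higher-order regularity. Producing the multi-point coupling estimate with the correct prefactor $R^\beta$, and balancing it against the tail decay $R^{\beta-\alpha_2}$ --- especially when $k+1-\beta$ is small or in the critical logarithmic regimes of Theorem~\ref{main}(1)--(2) --- is the most delicate step of the argument.
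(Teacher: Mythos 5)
Your overall strategy (truncate, apply Theorem~\ref{main} to the bounded part, control the tail using $\beta<\alpha_2$) starts in the same spirit as the paper, but the two steps on which your conclusion actually rests do not work. First, the estimate $|\Delta_h^{k+1}P_tu_R(x)|\le CR^\beta(t\wedge1)^{-(k+1)/\alpha_1}|h|^{k+1}$ is not available from the coupling machinery and is not plausible in this setting: a coupling of two (or of $k+1$) trajectories controls only first differences $P_tf(x)-P_tf(y)$, and telescoping $\Delta_h^{k+1}$ into first differences destroys precisely the higher-order cancellation you need, leaving a bound of order $|h|$, not $|h|^{k+1}$. Since the coefficient $c(x,z)$ is only assumed continuous with a Dini/H\"older-type modulus through $w(r)$, the semigroup is not expected to be $(k+1)$-times differentiable in space at all; the paper itself never claims more than the first-order (Lipschitz, log-Lipschitz or H\"older) bounds of Theorem~\ref{main}. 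Second, even granting that estimate, your limiting argument fails: with $t$ of unit order and $x,h$ fixed, the main term $CR^\beta|h|^{k+1}$ does not tend to $0$ as $R\to\infty$ (it blows up for $\beta>0$ and is merely bounded for $\beta=0$); the hypothesis $k+1>\beta$ plays no role in that limit because $h$ is fixed, so you cannot conclude $\Delta_h^{k+1}u\equiv0$.

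What is missing is the scaling step that the paper takes from \cite{RS}. There one sets $v(x)=\rho^{-\beta}u(\rho x)$, truncates $v_M=v\I_{\{|x|\le M\}}$, applies Theorem~\ref{main} to the bounded part, controls the tail not by a crude exit-probability bound but by a Lyapunov function $h(x)=(1+|x|^2)^{(\alpha_2-\varepsilon)/2}$ with $Lh\le c_3h$ (hence $P_1h\le c_4h$), giving $|P_1(v-v_M)|\le c_5M^{-(\alpha_2-\beta-\varepsilon)}$ on $B_1(0)$, and then optimizes $M=|x-y|^{-\theta/(\beta+\varepsilon)}$ to obtain a H\"older estimate for $v$ on $B_1(0)$ that is uniform in $\rho$ --- the analogue of \cite[(2.9)]{RS}. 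The polynomial conclusion is then extracted by the incremental-quotient iteration of \cite[Theorem 2.1]{RS}, which is exactly the device that converts a fixed small H\"older gain, valid uniformly at every scale, into regularity of order exceeding $\beta$; this replaces the higher-order semigroup estimate you tried to manufacture. Your tail bookkeeping ($O(tR^{\beta-\alpha_2})$, which also needs a moment rather than a probability estimate, and a drift correction when $\alpha_2\ge1$) parallels the paper's $M^{-(\alpha_2-\beta-\varepsilon)}$ term, but without the rescaling $v=\rho^{-\beta}u(\rho\cdot)$ and the iteration the argument does not close.
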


\subsection{Perturbation result} To further illustrate the power of the
coupling approach, we next give a perturbation result corresponding
to Theorem \ref{main}. Below we will consider the following L\'evy
type operator
\begin{equation}\label{lvy1}
L_*f(x)=\int\!\!\Big(f(x+z)-f(x)-\langle\nabla f(x),
z\rangle\I_{B(0,1)}(z)\Big)\left(c(x,z)\,\nu(dz)+\mu(x,dz)\right),\end{equation}
where $c(x,z)$ and $\nu(dz)$ are assumed same as those in the
beginning of this section, and $\mu(x,dz)$ is a L\'evy kernel on
$\R^d$ satisfying that $$\sup_{x\in \R^d}\int
(1\wedge|z|^2)\,\mu(x,dz)<\infty$$ and for any $h\in C_b^2(\R^d)$,
the function $$x\mapsto \int_{\R^d} h(z)
\frac{|z|^2}{1+|z|^2}\,\mu(x,dz)$$ is continuous. We assume that the
martingale problem for $(L_*, C_b^2(\R^d))$ is well-posed. Clearly,
the operator $L_*$ is just the operator $L$ perturbed by the L\'evy
kernel $\mu(x,dz)$. We emphasize that, we do not assume that the
L\'evy kernel $\mu(x,dz)$ is absolutely continuous with respect to
the Lebesgue measure.

For any $r>0$, we define
  $$w_\mu(r)=\sup_{x,y\in \R^d:|x-y|=r}\int_{\{|z|\le 1\}}|z|^2|\mu(x,dz)-\mu(y,dz)|;$$ if additionally  \begin{equation}\label{eefs}\sup_{x\in \R^d}\int_{\{|z|\le 1\}}|z|\,\mu(x,dz)<\infty,\end{equation} then $w_\mu(r)$ above is replaced by
  $$w_\mu(r)=\sup_{x,y\in \R^d:|x-y|=r}\int_{\{|z|\le 1\}}|z||\mu(x,dz)-\mu(y,dz)|.$$

\begin{theorem}\label{main0} Under assumptions of Theorem $\ref{main}$ and notations above, define
$$w_*(r)= w(r)+w_\mu(r),\quad r>0.$$
Then
\begin{itemize}
\item[(1)] the first assertion of Theorem $\ref{main}$ holds, if $\alpha_1\in(1,2)$ and \eqref{e:ffeesss} holds with $w_*(r)$ replacing $w(r)$.

\item[(2)] the second assertion of Theorem $\ref{main}$ holds, if $\alpha_1\in[1,2)$ and \eqref{e:ffeesss11} holds with $w_*(r)$ replacing $w(r)$.

\item[(3)] the third assertion of Theorem $\ref{main}$ holds, if $\alpha_2\in (0,1)$, \eqref{eefs} is satisfied, and \eqref{e:ffeesss1} holds with $w_*(r)$ replacing $w(r)$.

\end{itemize}\end{theorem}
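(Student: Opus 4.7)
The plan is to extend the coupling argument behind Theorem \ref{main} by constructing a joint process $(X_t,Y_t)$ on $\R^d\times \R^d$ whose marginals solve the martingale problem for $L_*$, and in which the additional perturbation kernel $\mu$ contributes a generator term estimated in terms of $w_\mu$. Since $(L_*, C_b^2(\R^d))$ is assumed well posed, both marginals are realizable as strong Markov processes; the task is to couple them so that the differential inequality proved in each case of Theorem \ref{main} still goes through, with $w$ replaced by $w_*$.

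For the coupling, I would keep exactly the construction of the proof of Theorem \ref{main} for the common part $c(x,z)\,\nu(dz)$, and treat the perturbation $\mu$ separately via the Hahn--Jordan decomposition of $\mu(x,dz)-\mu(y,dz)$. Writing $\mu(x,dz)\wedge\mu(y,dz)$ for the minimum measure, I would drive the common mass through a single Poisson random measure so that $X$ and $Y$ jump simultaneously by the same $z$, while the residual positive mass $(\mu(x,dz)-\mu(y,dz))^+$ drives asymmetric jumps affecting $X$ only and $(\mu(y,dz)-\mu(x,dz))^+$ drives asymmetric jumps affecting $Y$ only. A direct check shows that each marginal has the correct $\mu$-component, so (together with the $\nu$-coupling) the marginal generators are $L_*$.

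Next, I would apply the coupling generator to the test function $F(x,y)=\psi(|x-y|)$ used in the three cases of Theorem \ref{main}. The $c(x,z)\nu(dz)$-contribution reproduces the previous bound, controlled by $w(r)$. In the $\mu$-contribution, the synchronous jumps contribute zero off the diagonal: both marginals receive the same increment and the drift compensators from the small-jump terms cancel since $\nabla_x F+\nabla_y F=0$. The asymmetric jumps produce
\begin{equation*}
\int_{\{|z|\le 1\}}\!\!\Big(\psi(|x-y+z|)-\psi(|x-y|)-\psi'(|x-y|)\tfrac{\langle x-y,z\rangle}{|x-y|}\Big)\bigl(\mu(x,dz)-\mu(y,dz)\bigr)^+
\end{equation*}
together with a symmetric term from $Y$-only small jumps and the analogous large-jump pieces (without compensator). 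In cases (1) and (2) a standard Taylor estimate bounds the small-jump integrand by $C|\psi''(r)|\,|z|^2$, so the whole $\mu$-contribution is controlled by $C|\psi''(r)|\,w_\mu(r)$ plus lower-order terms; in case (3), using the integrability assumption \eqref{eefs}, the drift compensator can be absorbed and the bound becomes $C|\psi'(r)|\,w_\mu(r)$, with $w_\mu$ now defined through $|z|$. Combining with the $\nu$-bound gives the same differential inequality as in Theorem \ref{main} but with $w_*$ in place of $w$, and the three conclusions follow by the same ODE/martingale argument.

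The main obstacle is the first step: setting up a genuine Markov coupling whose marginals generate $L_*$, without any absolute-continuity assumption on $\mu$. The total-variation construction sketched above is the natural answer, but its rigorous justification requires verifying that the joint jump system is well defined as a strong Markov process and that its one-dimensional projections solve the martingale problem for $L_*$; here well-posedness of $(L_*,C_b^2(\R^d))$ is used, together with a standard approximation of $\mu$ by truncations to handle the singularity. Once this foundation is in place, the rest is a careful reworking of the computation of Theorem \ref{main} that simply carries the extra $w_\mu$ contribution into $w_*$.
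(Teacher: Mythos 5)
Your overall strategy coincides with the paper's: split $L_*=L+L_\mu$, keep the coupling operator $\widetilde L$ for the $c(x,z)\,\nu(dz)$ part, couple the perturbation by synchronizing the common mass $\mu(x,dz)\wedge\mu(y,dz)$ and letting the residuals $\mu(x,dz)-\mu(x,dz)\wedge\mu(y,dz)$ and $\mu(y,dz)-\mu(x,dz)\wedge\mu(y,dz)$ move one marginal only, then evaluate the coupling generator on $\psi(|x-y|)$ and feed the resulting inequality into the general Theorem \ref{T:re}. The paper does exactly this at the operator level (defining $\widetilde L_\mu$ and checking, as in Proposition \ref{P:coup1}, that $\widetilde L+\widetilde L_\mu$ is a coupling operator of $L_*$), and it resolves the foundational issue you flag not by truncating $\mu$ but by repeating the martingale-problem argument of Section \ref{section2}: the standing assumption that $x\mapsto \int h(z)\frac{|z|^2}{1+|z|^2}\,\mu(x,dz)$ is continuous gives existence of a solution for the coupling operator via Stroock's theorem, and well-posedness of $(L_*,C_b^2(\R^d))$ identifies the marginals and gives non-explosion. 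Your synchronous-jump cancellation and the treatment of case (3) under \eqref{eefs} (bound $C\psi'(r)w_\mu(r)$, matching \eqref{proofth25441}) are correct.

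There is, however, a concrete error in your cases (1)--(2): you claim the small-jump integrand of the asymmetric $\mu$-terms is bounded by $C|\psi''(r)|\,|z|^2$, hence the $\mu$-contribution by $C|\psi''(r)|\,w_\mu(r)$. This bound is false for the concave test functions actually used. Take $z$ orthogonal to $x-y$ with $|x-y|=r$: then $\psi(|x-y+z|)-\psi(r)\approx \psi'(r)\,|z|^2/(2r)$, and for $\psi(r)=r(1-\log^{-\theta}(1/r))$ (case (1)) one has $\psi'(r)/r\sim r^{-1}$ while $|\psi''(r)|\sim r^{-1}\log^{-(1+\theta)}(1/r)$, so no estimate of the form $C|\psi''(r)||z|^2$ can hold; the second-order term comes from the curvature of $x\mapsto|x|$, not of $\psi$. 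The correct bound, exactly as in the derivation of \eqref{proofth2544} and Proposition \ref{P:eses}(1), uses concavity ($\psi(b)-\psi(a)\le\psi'(a)(b-a)$) together with $|x-y+z|-|x-y|-\langle x-y,z\rangle/|x-y|\le |z|^2/(2|x-y|)$, giving the integrand bound $\frac{\psi'(r)}{r}\frac{|z|^2}{2}$ and hence the $\mu$-contribution $\le C\,w_\mu(r)\,\psi'(r)/r$. This is precisely the term $c_1 w_*(r)\psi'(r)r^{-1}$ in condition \eqref{e:fe1}, and the hypotheses \eqref{e:ffeesss}, \eqref{e:ffeesss11} with $w_*$ in place of $w$ are calibrated to it. So your conclusion survives, but only after replacing your Taylor estimate by this weaker (and correct) one; as written, that step fails.
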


The remainder of this paper is arranged as follows. The next section
is devoted to the construction of a new coupling operator for the
L\'evy type operator $L$ given by \eqref{levytype}, and the
existence of coupling process on $\R^{2d}$ associated with the
constructed coupling operator. In Section \ref{section3}, we first
present some preliminary estimates for the coupling operator, and
then give general results for the regularity of associated
semigroups, by making full use of the coupling operator and the
coupling process constructed in Section \ref{section2000}. Finally,
the proofs of all results above are given in the last part of
Section \ref{section3}.

\section{Coupling operator and coupling process for L\'evy type operators}\label{section2000}
This section is split into two parts. We first present a new coupling operator for the L\'evy type operator $L$ given by \eqref{levytype}, and then prove the existence of coupling process on $\R^{2d}$ associated with the constructed coupling operator.
\subsection{Coupling operator for L\'evy type operator} The construction below is heavily based on the refined basic coupling for stochastic differential equations driven by additive L\'evy noises first introduced in \cite{Lwcw}. See \cite{MW} for the recent study on stochastic differential equations driven by multiplicative L\'evy noises. However, the L\'evy type operator $L$ given by \eqref{levytype} essentially is different from stochastic differential equations with jumps, and the main difficulty here is due to that the coefficient $c(x,z)$ in the operator $L$ depends on both space variables, which requires a new idea for the construction of a coupling operator.

For any $x,y,z, u\in \R^d$, define \begin{equation}\label{e:not}
\begin{split}
c(x,y,u,z)&=c(x,z)\wedge c(y,z)\wedge c(x,z-u)\wedge c(y,z-u),\\
\nu_u(dz)&=\nu\wedge (\delta_u*\nu)(dz),\\
\mu_{x,y,u}(dz)&= c(x,y,u,z)\,\nu_u(dz),\\
\tilde\nu_{x,y}(dz)&=(c(x,z)\wedge
c(y,z))\,\nu(dz).\end{split}\end{equation} In particular, for any
$x,y,z,u\in\R^d$,
$$c(x,y,u,z)=c(x,y,-u,z-u).$$ The function $c(x,y,u,z)$  and the kernel $\mu_{x,y,u}(dz)$ are  crucial in the construction of the coupling operator below.

For any $x$, $y\in\R^d$ and $\kappa>0$, let
$$
  (x-y)_{\kappa}=\bigg(1\wedge \frac{\kappa}{|x-y|}\bigg)(x-y).
$$
We consider the jump system as follows
$$
  (x,y)\longrightarrow
    \begin{cases}
    (x+z, y+z+(x-y)_\kappa), & \frac12 \mu_{x,y,(y-x)_\kappa}(dz);\\
    (x+z, y+z+(y-x)_\kappa), & \frac12 \mu_{x,y,(x-y)_\kappa}(dz);\\
    (x+z, y+z), & \big(\tilde\nu_{x,y}\! -\! \frac12 \mu_{x,y,(y-x)_\kappa}  \!-\!\frac12 \mu_{x,y,(x-y)_\kappa}\big)(dz);\\
    (x+z,y),& \tilde c(x,y,z)\,\nu(dz);\\
    (x,y+z),& \tilde c(y,x,z)\,\nu(dz),
    \end{cases}
 $$ where
  $$\tilde c(x,y,z) =c(x,z)-c(x,z)\wedge c(y,z).$$
  Furthermore,  for any $h\in C_b^2(\R^{2d})$ and $x,y\in\R^d$, we define the following operator associated with the jumping system above
  \begin{equation}\label{proofth24}\begin{split}
    &\widetilde{L} h(x,y)\cr
    &= \frac{1}{2}\int\Big( h(x+z,y+ z+(x-y)_{\kappa})-h(x,y)-\langle\nabla_xh(x,y), z\rangle \I_{\{|z|\le 1\}}\cr
    &\qquad\qquad -\langle\nabla_yh(x,y), z+(x-y)_{\kappa}\rangle \I_{\{|z+(x-y)_{\kappa}|\le 1\}}\Big)\,\mu_{x,y,(y-x)_{\kappa}}(dz)\cr
    &\quad +\frac{1}{2}\int\Big( h(x+z,y+ z+(y-x)_{\kappa})-h(x,y)-\langle\nabla_xh(x,y), z\rangle \I_{\{|z|\le 1\}}\\
    &\qquad\qquad -\langle\nabla_yh(x,y), z+(y-x)_{\kappa}\rangle \I_{\{|z+(y-x)_{\kappa}|\le 1\}}\Big)\,\mu_{x,y,(x-y)_{\kappa}}(dz)\cr
    &\quad +\int\Big( h(x+z,y+z)-h(x,y)-\langle\nabla_xh(x,y), z\rangle \I_{\{|z|\le 1\}}\cr
    &\qquad\qquad-\langle\nabla_yh(x,y), z\rangle \I_{\{|z|\le 1\}}\Big)\,\Big(\tilde\nu_{x,y} -\frac{1}{2}\mu_{x,y,(x-y)_{\kappa}} -\frac{1}{2}\mu_{x,y,(y-x)_{\kappa}}\Big)(dz)\\
    &\quad +\int\Big( h(x+z,y)-h(x,y)-\langle\nabla_xh(x,y), z\rangle \I_{\{|z|\le 1\}}\Big)\tilde c(x,y,z)\,\nu(dz)\\
    &\quad +\int\Big( h(x,y+z)-h(x,y)-\langle\nabla_yh(x,y), z\rangle \I_{\{|z|\le 1\}}\Big)\tilde c(y,x,z)\,\nu(dz),
  \end{split}\end{equation} where  $\nabla_xh(x,y)$ and $\nabla_yh(x,y)$ are defined as the gradient of $h(x,y)$ with respect to $x\in\R^d$ and $y\in \R^d$ respectively.

We will claim that
\begin{proposition}\label{P:coup1} The operator $\widetilde{L}$ defined by \eqref{proofth24} is indeed a coupling operator of $L$; that is,
 for any $f,g\in C_b^2(\R^d)$, letting $h(x,y)=f(x)+g(y)$ for all $x,y\in\R^d$, it holds that
$$
  \widetilde L h(x,y)= L f(x)+ L g(y).
$$\end{proposition}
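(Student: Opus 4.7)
The plan is to exploit the additive structure of $h(x,y) = f(x) + g(y)$: since $\nabla_x h(x,y) = \nabla f(x)$ and $\nabla_y h(x,y) = \nabla g(y)$, each of the five summands in $\widetilde L h(x,y)$ splits cleanly into an $x$-part depending only on $f$ and a $y$-part depending only on $g$. The goal is then to verify that these collections sum to $Lf(x)$ and $Lg(y)$, respectively.

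For the $x$-parts, no change of variable is needed. In each of the five integrals the $f$-contribution already has the standard form $f(x+z) - f(x) - \langle \nabla f(x), z\rangle \I_{\{|z|\le 1\}}$, and in the fifth integral it vanishes because the $x$-coordinate does not jump. Summing the five coefficient measures against this integrand, the terms $\pm\tfrac{1}{2}\mu_{x,y,(x-y)_\kappa}$ and $\pm\tfrac{1}{2}\mu_{x,y,(y-x)_\kappa}$ arising from the first three integrals cancel, leaving $\tilde\nu_{x,y}(dz) + \tilde c(x,y,z)\,\nu(dz)$. Using $\tilde\nu_{x,y}(dz) = (c(x,z)\wedge c(y,z))\nu(dz)$ and $\tilde c(x,y,z) = c(x,z) - c(x,z)\wedge c(y,z)$, this collapses to $c(x,z)\nu(dz)$, producing $Lf(x)$.

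The more delicate computation is for the $y$-parts of the first two integrals, whose integrands involve the shifted increments $g(y + z + (x-y)_\kappa)$ and $g(y + z + (y-x)_\kappa)$. I would substitute $w = z + (x-y)_\kappa$ in the first integral and $w = z + (y-x)_\kappa$ in the second, after which the gradient compensator $\langle \nabla g(y), z+(x-y)_\kappa\rangle \I_{\{|z+(x-y)_\kappa|\le 1\}}$ becomes $\langle \nabla g(y), w\rangle \I_{\{|w|\le 1\}}$ automatically. The key algebraic fact is that the pushforward of $\mu_{x,y,(y-x)_\kappa}(dz)$ under the translation $z \mapsto z + (x-y)_\kappa$ equals $\mu_{x,y,(x-y)_\kappa}(dw)$, and symmetrically for the second integral. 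This follows from two observations: first, pushforward by a measurable bijection commutes with the meet of measures, so with $u = (x-y)_\kappa$ the translation sends $\nu$ to $\delta_u * \nu$ and $\delta_{-u} * \nu$ back to $\nu$, hence $\nu_{-u} = \nu \wedge (\delta_{-u}*\nu)$ is sent to $(\delta_u*\nu) \wedge \nu = \nu_u$; second, the explicit symmetry $c(x,y,u,z) = c(x,y,-u,z-u)$ recorded in \eqref{e:not}, which makes the density $c(x,y,-u,\cdot)$ transform to $c(x,y,u,\cdot)$ consistently.

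After these substitutions, the $y$-parts of the first two integrals carry measures $\tfrac{1}{2}\mu_{x,y,(x-y)_\kappa}$ and $\tfrac{1}{2}\mu_{x,y,(y-x)_\kappa}$, which combine with the negative contributions from the third integral to leave $\tilde\nu_{x,y}$; adding the fifth integral's $\tilde c(y,x,w)\nu(dw)$ then yields $c(y,w)\nu(dw)$, reconstructing $Lg(y)$ and completing the proof. The main obstacle is the change-of-variable step together with the pushforward identity for $\mu_{x,y,u}$; this is precisely the algebraic reason the operator \eqref{proofth24} is built with $\mu_{x,y,(y-x)_\kappa}$ in the first integral but $\mu_{x,y,(x-y)_\kappa}$ in the second, so that translation interchanges them and the cancellations with the third integral go through.
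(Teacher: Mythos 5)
Your proposal is correct and follows essentially the same route as the paper: split $h(x,y)=f(x)+g(y)$ by linearity, note the $x$-part needs no substitution since the measures telescope to $c(x,z)\,\nu(dz)$, and handle the $y$-part by the change of variables $z\mapsto z+(x-y)_\kappa$ (resp.\ $z\mapsto z+(y-x)_\kappa$), using the translation identity for $\nu_u$ from Lemma \ref{L:mmea} together with the symmetry $c(x,y,u,z)=c(x,y,-u,z-u)$ to swap $\mu_{x,y,(y-x)_\kappa}$ and $\mu_{x,y,(x-y)_\kappa}$. This is exactly the cancellation mechanism in the paper's proof of Proposition \ref{P:coup1}, so no further changes are needed.
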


\begin{proof} The proof is similar to that in \cite[Section 2.1]{Lwcw} with slight modifications, and for the sake of completeness we present it here.
Let $h(x,y)=g(y)$ for any $x,y\in\R^d$, where $g\in C_b^2(\R^d)$. Then, according to \eqref{proofth24},
  \begin{align*}
    \widetilde{L} h(x,y)
    &= \frac{1}{2}\int \Big(g(y+ z+(x-y)_{\kappa})-g(y)\\
    &\qquad\qquad -\langle\nabla g(y), z+(x-y)_{\kappa}\rangle \I_{\{|z+(x-y)_{\kappa}|\le 1\}}\Big)\,\mu_{x,y,(y-x)_{\kappa}}(dz)\\
    &\quad+\frac{1}{2}\int \Big( g(y+ z+(y-x)_{\kappa})-g(y)\\
    &\qquad\qquad -\langle\nabla g(y), z+(y-x)_{\kappa}\rangle \I_{\{|z+(y-x)_{\kappa}|\le 1\}}\Big)\,\mu_{x,y,(x-y)_{\kappa}}(dz)\\
    &\quad+\int \Big(g(y+z)-g(y)-\langle\nabla g(y), z\rangle \I_{\{|z|\le 1\}}\Big)\\
     &\qquad\qquad\quad\times \Big(\tilde\nu_{x,y} \!-\!\frac{1}{2}\mu_{x,y,(x-y)_{\kappa}} \! -\!\frac{1}{2}\mu_{x,y,(y-x)_{\kappa}}\Big)(dz)\\
    &\quad+\int \Big(g(y+z)-g(y)-\langle\nabla g(y), z\rangle \I_{\{|z|\le 1\}}\Big) \tilde c(y,x,z)\,\nu(dz).
    \end{align*}
Changing the variables $z+(x-y)_{\kappa}\to u$ and $z+(y-x)_{\kappa} \to u$ respectively and using Lemma \ref{L:mmea} below in the first two terms of the right hand side of the equality above lead to
  \begin{align*}
    &\widetilde{L} h(x,y)\\
    &=\frac{1}{2}\int\Big(g(y+u)-g(y)-\langle\nabla g(y), u\rangle \I_{\{|u|\le 1\}}\Big)\\
    &\qquad\qquad \times c(x,y,(y-x)_{\kappa},u-(x-y)_\kappa)\,\nu_{(y-x)_\kappa}(d(u-(x-y)_{\kappa}))\\
    &\quad+\frac{1}{2}\int\Big( g(y+u)-g(y)-\langle\nabla g(y), u\rangle \I_{\{|u|\le 1\}}\Big)\\
    &\qquad\qquad\times  c(x,y,(x-y)_{\kappa},u-(y-x)_\kappa)\,\nu_{(x-y)_\kappa}(d(u-(y-x)_{\kappa}))\\
    &\quad+\int\Big(g(y+z)-g(y)-\langle\nabla g(y), z\rangle \I_{\{|z|\le 1\}}\Big) \\
    &\qquad\qquad\times\Big(\tilde\nu_{x,y} -\frac{1}{2}\mu_{x,y,(x-y)_{\kappa}} -\frac{1}{2}\mu_{x,y,(y-x)_{\kappa}}\Big)(dz)\\
    &\quad+\int \Big(g(y+z)-g(y)-\langle\nabla g(y), z\rangle \I_{\{|z|\le 1\}}\Big) \tilde c(y,x,z)\,\nu(dz)\\
    &=\frac{1}{2}\int\Big(g(y+u)-g(y)-\langle\nabla g(y), u\rangle \I_{\{|u|\le 1\}}\Big) c(x,y,(x-y)_{\kappa},u)\,\nu_{(x-y)_\kappa}(du)\\
    &\quad+\frac{1}{2}\int\Big( g(y+u)-g(y)-\langle\nabla g(y), u\rangle \I_{\{|u|\le 1\}}\Big)c(x,y,(y-x)_{\kappa},u)\,\nu_{(y-x)_\kappa}(du)\\
    &\quad+\int\Big(g(y+z)-g(y)-\langle\nabla g(y), z\rangle \I_{\{|z|\le 1\}}\Big) \\
    &\qquad\qquad\times \Big(\tilde\nu_{x,y} -\frac{1}{2}\mu_{x,y,(x-y)_{\kappa}} -\frac{1}{2}\mu_{x,y,(y-x)_{\kappa}}\Big)(dz)\\
    &\quad+\int \Big(g(y+z)-g(y)-\langle\nabla g(y), z\rangle \I_{\{|z|\le 1\}}\Big) \tilde c(y,x,z)\,\nu(dz)\\
    &=\frac{1}{2}\int\Big(g(y+u)-g(y)-\langle\nabla g(y), u\rangle \I_{\{|u|\le 1\}}\Big)\,\mu_{x,y,(x-y)_{\kappa}}(du)\\
    &\quad+\frac{1}{2}\int\Big( g(y+u)-g(y)-\langle\nabla g(y), u\rangle \I_{\{|u|\le 1\}}\Big)\,\mu_{x,y,(y-x)_{\kappa}}(du)\\
    &\quad+\int\Big(g(y+z)-g(y)-\langle\nabla g(y), z\rangle \I_{\{|z|\le 1\}}\Big)\\
    &\qquad\qquad \times \Big(\tilde\nu_{x,y} -\frac{1}{2}\mu_{x,y,(x-y)_{\kappa}} -\frac{1}{2}\mu_{x,y,(y-x)_{\kappa}}\Big)(dz)\\
    &\quad+\int \Big(g(y+z)-g(y)-\langle\nabla g(y), z\rangle \I_{\{|z|\le 1\}}\Big) \tilde c(y,x,z)\,\nu(dz)\\
    &=L g(y).\end{align*}

On the other hand, if $h(x,y)=f(x)$ for any $x,y\in\R^d$ and any $f\in C_b^2(\R^d)$, then we can easily see that
$\widetilde{L} h(x,y)=Lf(x)$. Combining with both conclusions above yields that the operator $\widetilde L$ defined by \eqref{proofth24} is a coupling operator of $L$. \end{proof}

The following lemma has been used in the proof above.
\begin{lemma}\label{L:mmea} For any $z\in \R^d$ with $z\neq 0$, $\nu_{z}(du)$ is a finite measure on $(\R^d, \mathscr{B}(\R^d))$ such that
$$\mu_{z}(d(u+z))= \mu_{-z}(du)$$
 In particular, $$\mu_{z}(\R^d)= \mu_{-z}(\R^d).$$ \end{lemma}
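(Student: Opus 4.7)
The plan is to prove the two assertions of Lemma \ref{L:mmea}---finiteness of $\nu_z$ and the shift identity for $\mu_{x,y,z}$ (which the statement abbreviates to $\mu_z$ with $x,y$ suppressed)---via elementary manipulations based directly on the definitions in \eqref{e:not}.

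For the finiteness of $\nu_z$ when $z\neq 0$, I would exploit the fact that $\nu$ concentrates near the origin while $\delta_z*\nu$ concentrates near $z$: split $\R^d$ as $B(0,|z|/2)\cup B(0,|z|/2)^c$, use $\nu_z\le \nu$ on the outer piece to bound the mass by $\nu(\{|w|\ge |z|/2\})$ (which is finite by the L\'{e}vy integrability condition $\int(1\wedge|w|^2)\,\nu(dw)<\infty$), and use $\nu_z\le \delta_z*\nu$ together with the inclusion $B(0,|z|/2)-z\subseteq \{|w|\ge |z|/2\}$ on the inner piece to obtain the same bound.

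For the shift relation, I would first prove it at the level of $\nu_z$ alone and only afterwards glue in the density $c(x,y,z,\cdot)$. Since $\nu_z(dw)=\nu(dw)\wedge \nu(d(w-z))$, and the pure translation $w=u+z$ commutes with the lattice operation $\wedge$ on measures (because translation is a measure-space isomorphism), one obtains
\[
\nu_z(d(u+z))=\nu(d(u+z))\wedge \nu(du)=\nu(du)\wedge (\delta_{-z}*\nu)(du)=\nu_{-z}(du).
\]
Combining this with the identity $c(x,y,z,u+z)=c(x,y,-z,u)$, which follows by relabeling the symmetry $c(x,y,u,z)=c(x,y,-u,z-u)$ recorded right after \eqref{e:not}, then yields
\[
\mu_{x,y,z}(d(u+z))=c(x,y,z,u+z)\,\nu_z(d(u+z))=c(x,y,-z,u)\,\nu_{-z}(du)=\mu_{x,y,-z}(du),
\]
and evaluating at $A=\R^d$ gives the "in particular" equality. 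The only step requiring some attention is the sign bookkeeping in the translation substitution, together with the observation that $\wedge$ commutes with push-forward by a bijection; both are straightforward, and I do not anticipate a genuine obstacle.
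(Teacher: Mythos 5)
Your argument is correct. Note, however, that the paper does not actually prove this lemma: its ``proof'' is a citation to \cite[Remark 2.1 and Corollary 6.2]{Lwcw}, where the analogous facts are established for $\nu_z=\nu\wedge(\delta_z*\nu)$ alone (the setting there has no state-dependent density). What you have written is therefore a self-contained replacement for the omitted details, and it follows the standard route: the near/far splitting at radius $|z|/2$, using $\nu_z\le\nu$ off $B(0,|z|/2)$ and $\nu_z\le\delta_z*\nu$ with $B(0,|z|/2)-z\subseteq\{|u|\ge|z|/2\}$ inside, gives finiteness from $\int(1\wedge|u|^2)\,\nu(du)<\infty$; and the shift identity for $\nu_z$ follows since push-forward by the translation $u\mapsto u+z$ is an order isomorphism on measures, hence commutes with $\wedge$, sending $\nu\mapsto\delta_{-z}*\nu$ and $\delta_z*\nu\mapsto\nu$. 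The one ingredient genuinely beyond the cited results --- incorporating the density via $c(x,y,z,u+z)=c(x,y,-z,u)$, which is the recorded symmetry $c(x,y,u,z)=c(x,y,-u,z-u)$ --- you handle correctly, and this is exactly how the identity is used in the proof of Proposition \ref{P:coup1} and in Section \ref{sec-pc}. Two cosmetic points: the finiteness of $\mu_{x,y,z}$ (not just $\nu_z$) is immediate since $c\le c^*$, and it is worth saying explicitly that the lemma's $\mu_z$ abbreviates $\mu_{x,y,z}$ with $x,y$ fixed, as you observed.
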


\begin{proof} The proof has been given in \cite[Remark 2.1 and Corollary 6.2]{Lwcw}. We omit it here.  \end{proof}

\subsection{Coupling process for L\'evy type operators}\label{section2}
The purpose of this part is to construct a coupling process
associated with the coupling operator $ \widetilde{L}$ given by
\eqref{proofth24}. Though the following argument is standard (see
\cite[Section 2]{LW14} or \cite[Section 2.2]{Wang15} for example),
we still would like to present some details here.

 Let $\mathscr{D}([0,\infty);\R^d)$ be the space of right continuous
$\R^d$-valued functions having left limits on $[0,\infty)$ and
equipped with the Skorokhod topology. For $t\ge0$, denote by $X_t$
the projection coordinate map on $\mathscr{D}([0,\infty);\R^d)$. A
probability measure $\Pp^x$ on the Skorokhod space
$\mathscr{D}([0,\infty);\R^d)$ is said to be a solution to the
martingale problem for $(L,C_b^2(\R^d))$ with initial value
$x\in\R^d$, if $\Pp^x(X_0=x)=1$ and, for every $f\in C_b^2(\R^d)$,
$$\left\{f(X_t)-f(x)-\int_0^t Lf(X_s)\,ds, t\ge0\right\}$$
is a $\Pp^x$-martingale. The martingale problem for
$(L,C_b^2(\R^d))$ is said to be well-posed if it has a unique
solution for every initial value $x\in\R^d$. The definitions above are well adapted to the martingale problem for $(\widetilde L, C_b^2(\R^{2d}))$ with necessary modifications.
We can refer to \cite{AK, Bass1, BT, CZ, K, KU, LM, Jp,Jp2, MP92, MP14a, MP14b, St} and the references therein for more details about martingale problem for non-local operators.

In order to prove the existence of the martingale problem for the coupling operator  $(\widetilde L, C_b^2(\R^{2d}))$, we will write
  the coupling operator $\widetilde L$ into the form as the expression of L\'evy type operator on $C_b^2(\R^{2d})$.
For any $x,y\in\R^d$, and $A\in \mathscr{B}(\R^{2d})$, set
  \begin{align*}
\widetilde \nu(x,y,A):=&\frac{1}{2}\int_{\{(z,z-(x-y)_\kappa\in A\}}\,\mu_{x,y,(y-x)_\kappa}(dz)+\frac{1}{2}\int_{\{(z,z+(x-y)_\kappa)\in A\}}\,\mu_{x,y,(x-y)_\kappa}(dz)\\
 &+\int_{\{(z,z)\in A\}}\Big(\tilde \nu_{x,y} -\frac{1}{2}\mu_{x,y,(y-x)_\kappa} -\frac{1}{2}\mu_{x,y,(x-y)_\kappa}\Big)(dz)\\
 &+\int_{\{(z,0)\in A\}} \tilde c(x,y,z)\,\nu(dz)+\int_{\{(0,z)\in A\}}\tilde c(y,x,z)\,\nu(dz) .
   \end{align*}
Then, for any $x,y\in\R^d$ and $f\in C^2_b(\R^{2d})$,
   \begin{align*} \widetilde{L} f(x,y)
   &=\int_{\R^{d}\times \R^d}\Big(f\big((x,y)+(u_1,u_2)\big)-f(x,y)-\langle \nabla_x f(x,y),u_1\rangle\I_{\{|u_1|\le 1\}}\\
  &\qquad\qquad \quad- \langle \nabla_y f(x,y),u_2\rangle\I_{\{|u_2|\le 1\}}\Big)\,\widetilde\nu(x,y,du_1,du_2). \end{align*}

For any $h\in C_b(\R^{2d})$ and $x$, $y\in\R^d$, we have
  \begin{align*}
  &\int_{\R^{2d}} h(u)\frac{|u|^2}{1+|u|^2}\,\widetilde \nu(x,y,du)\\
  &= \int_{\R^{d}\times \R^d} h((u_1, u_2))\frac{|u_1|^2+|u_2|^2}{1+|u_1|^2+|u_2|^2}\,\widetilde \nu(x,y,du_1,du_2)\\
  &=\frac{1}{2}\int_{\R^{d}}
h((z,z+(x-y)_\kappa))
\frac{|z|^2+|z+(x-y)_\kappa|^2}{1+|z|^2+|z+(x-y)_\kappa|^2}\,\mu_{x,y,(y-x)_\kappa}(dz)\\
&\quad+ \frac{1}{2}\int_{\R^{d}} h((z,z-(x-y)_\kappa))
\frac{|z|^2+|z-(x-y)_\kappa|^2}{1+|z|^2+|z-(x-y)_\kappa|^2}\,\mu_{x,y,(x-y)_\kappa}(dz)\\
&\quad+ \int_{\R^{d}} h((z,z))
\frac{|z|^2+|z|^2}{1+|z|^2+|z|^2}\Big(\tilde\nu_{x,y}
-\frac{1}{2}\mu_{x,y,(x-y)_\kappa}
-\frac{1}{2}\mu_{x,y,(y-x)_\kappa}\Big)(dz)\\
&\quad+\int_{\R^d}
h((z,0))\frac{|z|^2}{1+|z|^2}\big(c(x,z)-c(x,z)\wedge
c(y,z)\big)\,\nu(dz)\\
&\quad+\int_{\R^d}
h((0,z))\frac{|z|^2}{1+|z|^2}\big(c(y,z)-c(x,z)\wedge
c(y,z)\big)\,\nu(dz).
\end{align*} Since $c(x,z)$ is bounded and $(x,z)\mapsto c(x,z)$ is continuous, the
function $(x,y)\mapsto\int_{\R^{2d}} h(u)\frac{|u|^2}{1+|u|^2}\,\widetilde
\nu(x,y,du)$ is continuous too.  Therefore, by \cite[Theorem
2.2]{St}, there is a solution to the martingale problem for
$(\widetilde{L}, C_b^2(\R^{2d}))$, i.e., there are a probability
space $(\widetilde{\Omega}, \widetilde{\mathscr{F}},
(\widetilde{\mathscr{F}}_t)_{t\ge0}, \widetilde{\Pp})$ and an
$\overline{\R}^{2d}:=(\R\cup\{\infty\})^{2d}$-valued process
$(\widetilde{X}_t)_{t\ge0}:=(X_t',X_t'')_{t\ge0}$ such that
$(\widetilde{X}_t)_{t\ge0}$ is
$(\widetilde{\mathscr{F}}_t)_{t\ge0}$-progressively measurable, and
for every $f\in C_b^2(\R^{2d})$,
  $$\bigg\{f(\widetilde{X}_{t\wedge \widetilde\zeta})-\int_0^{t\wedge \widetilde\zeta} \widetilde{L}f(\widetilde{X}_s)\,ds, t\ge 0\bigg\}$$
is an $(\widetilde{\mathscr{F}}_t)_{t\ge0}$-local martingale, where
$\widetilde\zeta$ is the explosion time of $(\widetilde{X}_t)_{t\ge0}$, i.e.,\
  $$\widetilde\zeta=\lim_{n\to\infty}\inf\Big\{t\ge0: |X_t'|+|X_t''|\ge n\Big\}.$$
By Proposition \ref{P:coup1}, $\widetilde{L}$ is the coupling
operator of $L$, and so both distributions of the processes
$(X_t')_{t\ge0}$ and $(X_t'')_{t\ge0}$ are solutions to the
martingale problem of $L$. Since we assume that the martingale
problem for $(L,C_b^2(\R^d))$ is well-posed, the processes
$(X_t')_{t\ge0}$ and $(X_t'')_{t\ge0}$ are non-explosive, and so we
have $\widetilde\zeta=\infty$ a.e. That is, the coupling operator
$\widetilde{L}$ generates a non-explosive process
$(\widetilde{X}_t)_{t\ge0}$.

Let $T$ be the coupling time of $(X_t')_{t\ge0}$ and
$(X_t'')_{t\ge0}$, i.e.,\
  $T=\inf\{t\ge0: X_t'=X_t''\}.$
Then $T$ is an $(\widetilde{\mathscr{F}}_t)_{t\geq 0}$-stopping
time. Construct a new process $(Y'_t)_{t\ge0}$ as follows
  $$Y_t'=
  \begin{cases}
  X_t'', & t< T;\\
  X_t', & t\ge T.\\
  \end{cases}
  $$
We can verify that $(Y_t')_{t\ge0}$ is a solution to the martingale problem of $L$, see \cite[Section 2.2]{Wang15}. Since
the martingale problem for the operator $L$ is well posed,
$(Y_t')_{t\ge0}$ and $(X_t'')_{t\ge0}$ are equal in the
distribution. Therefore, we conclude that $(X_t',Y_t')_{t\ge0}$ is
also a non-explosive coupling process of $(X_t)_{t\ge0}$ such that
$X_t'=Y_t'$ for any $t\ge T$ and the generator of
$(X_t',Y_t')_{t\ge0}$ before the coupling time $T$ is just the
coupling operator $\widetilde{L}$.
\section{Coupling approach for regularity of semigroups}\label{section3}

\subsection{Preliminary calculations}\label{sec-pc} In the following, we assume that $\kappa\in(0,1]$.
Let $\widetilde{L}$ be the coupling operator given above. We will estimate $\widetilde{L} f(|x-y|)$
for any $0\le f\in C_b([0,\infty))\cap C^2((0,\infty))$ such that $f(0)=0$, and $f'\ge0$ and $f''\le0$ on $(0,2]$.

For any $h\in C_b^2(\R^{2d})$, define \begin{align*} \widetilde L_R h(x,y)= & \int \Big(h(x+z,y)-h(x,y)-\langle\nabla_x h(x,y), z\rangle\I_{\{|z|\le 1\}}\Big)\tilde c(x,y,z)\,\nu(dz)\\
&+ \int \Big(h(x,y+z)-h(x,y)-\langle \nabla_y h(x,y), z\rangle\I_{\{|z|\le 1\}}\Big)\tilde c(y,x,z)\,\nu(dz).\end{align*} Set $$\widetilde L_C:= \widetilde L-
\widetilde L_R.$$ Note that, if we define the
following operator
\begin{align*}L_Cf(x)&:=\int \big(f(x+z)-f(x)-\langle \nabla f(x),z\rangle\I_{B(0,1)}(z)\big)\,\nu_{x,y}(dz)\\
&=\int \big (f(x+z)-f(x)-\langle \nabla f(x),z\rangle\I_{B(0,1)}(z)\big)(c(x,z)\wedge c(y,z))\,\nu(dz),\end{align*} then, following the proof of Proposition \ref{P:coup1}, we can see that $\widetilde L_C$ is a coupling operator of $L_C$.

First, according to Lemma \ref{L:mmea}, $\nu_{(y-x)_\kappa}(dz)$ is
a finite measure on $\R^d$ for any $x,y\in \R^d$ with $x\neq y$. Then, for any $f\in C_b([0,\infty))\cap
C^2((0,\infty))$ and $x,y\in \R^d$ with $x\neq y$,
\begin{align*}&\int\big(\langle\nabla_xf(|x-y|),z\rangle \I_{\{|z|\le 1\}}\\
&\qquad\qquad\qquad +\langle\nabla_y
f(|x-y|),z+(x-y)_\kappa\rangle\I_{\{|
z+(x-y)_\kappa|\le 1\}}\big)\,\mu_{x,y,(y-x)_\kappa}(dz)\\
&=\frac{f'(|x-y|)}{|x-y|}\bigg(\int_{\{|z|\le 1\}}\langle
x-y,z\rangle\,\mu_{x,y,(y-x)_\kappa}(dz)\\
&\qquad\qquad\qquad\quad- \int_{\{|z+(x-y)_\kappa|\le 1\}}\langle
x-y,z+(x-y)_\kappa\rangle\,\mu_{x,y,(y-x)_\kappa}(dz)\bigg)\\
&= \frac{f'(|x-y|)}{|x-y|}\bigg(\int_{\{|z|\le 1\}}\langle
x-y,z\rangle\,\mu_{x,y,(y-x)_\kappa}(dz)\\
&\qquad\qquad\qquad\quad- \int_{\{|z|\le 1\}}\langle
x-y,z\rangle\,\mu_{x,y,(x-y)_\kappa}(dz)\bigg),\end{align*} where in the last equality we used the fact that
$$c(x,y,(y-x)_\kappa, u-(x-y)_\kappa)=c(x,y,(x-y)_\kappa,u),\quad x,y,u\in \R^d$$ and Lemma \ref{L:mmea}.
 Similarly,
it holds that
\begin{align*}&\int\big(\langle\nabla_xf(|x-y|),z\rangle \I_{\{|z|\le 1\}}\\
&\qquad\qquad\qquad +\langle\nabla_y
f(|x-y|),z+(x-y)_\kappa\rangle\I_{\{|
z+(x-y)_\kappa|\le 1\}}\big)\,\mu_{x,y,(x-y)_\kappa}(dz)\\
&= \frac{f'(|x-y|)}{|x-y|}\bigg(\int_{\{|z|\le 1\}}\langle
x-y,z\rangle\,\mu_{x,y,(x-y)_\kappa}(dz)\\
&\qquad\qquad\qquad\quad- \int_{\{|z|\le 1\}}\langle
x-y,z\rangle\,\mu_{x,y,(y-x)_\kappa}(dz)\bigg).\end{align*} Therefore,
for any $x$, $y\in\R^d$ with $x\neq y$,
\begin{align*}
\widetilde{L}_C f(|x-y|)   &=\frac{1}{2}\mu_{x,y,(x-y)_\kappa}(\R^d) \left(f\big(|x-y|+\kappa\wedge |x-y|\big)-f(|x-y|)\right)\\
   &\quad+ \frac{1}{2}\mu_{x,y,(y-x)_\kappa}(\R^d)\left(f\big(|x-y|-\kappa\wedge |x-y|\big)-f(|x-y|)\right)\\
  &=\frac{1}{2} \mu_{x,y,(x-y)_{\kappa}}(\R^d)\Big[f\big(|x-y|+\kappa\wedge |x-y|\big)\\
  &\qquad\qquad\qquad\qquad\,\,+f\big(|x-y|-\kappa\wedge |x-y|\big)  - 2f(|x-y|)\Big],
  \end{align*} where in the last equality we have used the fact that $\mu_{x,y,(x-y)_\kappa}(\R^d)=\mu_{x,y,(y-x)_\kappa}(\R^d).$

Next, we assume that $f\ge 0$ with $f(0)=0$ on $[0,\infty)$, and
$f'\ge0$ and $f''\le 0$ on $(0,2]$. Let $\varepsilon_0\in
(0,\kappa]$. Then, for any $\varepsilon\in (0,\varepsilon_0]$ and
any $x,y\in \R^d$ with $|x-y|\le \varepsilon$, we have
 \begin{equation}\label{proofth2544} \widetilde{L}_C f(|x-y|)\le \frac{1}{2}J(|x-y|) \Big(f(2|x-y|)- 2f(|x-y|)\Big),  \end{equation} where
in the inequality above \begin{equation}\label{refmea}J(r):=\inf_{x,y\in \R^d: |x-y|=r}
\mu_{x,y,(x-y)}(\R^d)\end{equation} and we have used the fact that $$f(2r)=f(r)+\int_r^{2r} f'(s)\,ds=f(r)+\int_0^r f'(s+r)\,ds\le f(r)+\int_0^r f'(s)\,ds=
2f(r)$$ for any $r\in (0,\varepsilon_0]$.

We will give estimates for $\widetilde{L}_R f(|x-y|)$. For any $f\in
C_b([0,\infty))\cap C^2((0,\infty))$ with $f\ge0$, $f'\ge0$ and
$f''\le 0$ on $(0,2]$. Then, for any $x$, $y\in\R^d$ with $x\neq y$,
\begin{align*}\allowdisplaybreaks
    \widetilde{L}_R &f(|x-y|)\\
    =& \int\bigg(f(|x-y+z|)- f(|x-y|)-\frac{f'(|x-y|)}{|x-y|}\langle x-y, z\rangle \I_{\{|z|\le 1\}}\bigg)\\
     &\qquad\qquad\times (c(x,z)-c(x,z)\wedge c(y,z))\,\nu(dz)\\
     &+ \int\bigg(f(|x-y-z|)- f(|x-y|)+\frac{f'(|x-y|)}{|x-y|}\langle x-y, z\rangle \I_{\{|z|\le 1\}}\bigg) \\
     &\qquad\qquad \quad \times (c(y,z)-c(x,z)\wedge c(y,z))\,\nu(dz)\\
     =& \int_{\{|z|\le 1\}}\bigg(f(|x-y+z|)- f(|x-y|)-\frac{f'(|x-y|)}{|x-y|}\langle x-y, z\rangle\bigg)\\
     &\qquad\qquad\times (c(x,z)-c(x,z)\wedge c(y,z))\,\nu(dz)\\
     &+ \int_{\{|z|\le 1\}}\bigg(f(|x-y-z|)- f(|x-y|)+\frac{f'(|x-y|)}{|x-y|}\langle x-y, z\rangle \bigg) \\
     &\qquad\qquad \quad \times (c(y,z)-c(x,z)\wedge c(y,z))\,\nu(dz) \\
     &+\int_{\{|z|>1\}} \bigg(f(|x-y+z|)- f(|x-y|)\bigg)(c(x,z)-c(x,z)\wedge c(y,z))\,\nu(dz)\\
     &+\int_{\{|z|>1\}} \bigg(f(|x-y-z|)- f(|x-y|)\bigg)(c(y,z)-c(x,z)\wedge c(y,z))\,\nu(dz).
  \end{align*}

   We further consider the following two cases.

(i) Since for any $a,b\in(0,2]$,
$$f(b)-f(a)\le f'(a) (b-a),$$ we have that for any $x,y,z\in \R^d$ with $0<|x-y|\le \varepsilon_0$ and $|z|\le 1$,
\begin{align*}&f(|x-y+z|)- f(|x-y|)-\frac{f'(|x-y|)}{|x-y|}\langle x-y, z\rangle\\
&\le \frac{f'(|x-y|)}{|x-y|}\Big(|x-y+z||x-y|- |x-y|^2-\langle x-y, z\rangle\Big)\\
&\le  \frac{f'(|x-y|)}{|x-y|}\frac{|z|^2}{2}, \end{align*} where the last inequality follows from the fact that
$$\langle x-y,z\rangle=\frac{1}{2}\left(|x-y+z|^2-|x-y|^2-|z|^2\right),\quad x,y,z\in\R^d.$$ This yields that for any $x,y\in \R^d$ with $0<|x-y|\le \varepsilon_0$,
\begin{equation}\label{proofth2544}\begin{split}\widetilde L_R f(|x-y|)&\le  \bigg(\int_{\{|z|\le 1\}} |c(x,z)-c(y,z)||z|^2\,\nu(dz) \bigg) \frac{f'(|x-y|)}{|x-y|}\\
&\quad + 2\bigg(\int_{\{|z|>1\}}\,\nu(dz) \bigg)\bigg[\sup_{x,z\in
\R^d: |z|> 1} c(x,z)\bigg]\|f\|_\infty .\end{split}\end{equation}

(ii) If \begin{equation}\label{e:ffvv}\int_{\{|z|\le
1\}}|z|\,\nu(dz)<\infty,\end{equation} then for any $x,y,z\in \R^d$ with $0<|x-y|\le \varepsilon_0$ and $|z|\le 1$,
\begin{align*}&f(|x-y+z|)- f(|x-y|)-\frac{f'(|x-y|)}{|x-y|}\langle x-y, z\rangle\le 2{f'(|x-y|)}|z|. \end{align*} Following the same argument as that in (i), we can arrive at for any $x,y\in \R^d$ with $0<|x-y|\le \varepsilon_0$
\begin{equation}\label{proofth25441}\begin{split}\widetilde L_R f(|x-y|)\le &4\bigg(\int_{\{|z|\le 1\}} |c(x,z)-c(y,z)| |z|\,\nu(dz) \bigg)
{f'(|x-y|)}\\
&+2\bigg( \int_{\{|z|>1\}}\,\nu(dz)\bigg) \bigg[\sup_{x,z\in \R^d:
|z|> 1} c(x,z)\bigg]\|f\|_\infty .\end{split}\end{equation}

Combining all the estimates above, we can get that
\begin{proposition}\label{P:eses}
Let $0\le f\in C_b([0,\infty))\cap C^2((0,\infty))$ such that
$f(0)=0$, and $f'\ge0$ and $f''\le 0$ on $(0,2]$. Let
$0<\varepsilon_0\le \kappa\le 1$ and $J(r)$ be defined by
\eqref{refmea}. Then, for any $x,y\in \R^d$ with $0<|x-y|\le
\varepsilon_0$,
\begin{itemize}
\item[(1)] it holds that
\begin{align*} \widetilde{L} f(|x-y|)\le  &\frac{1}{2}J(|x-y|) \Big(f(2|x-y|)- 2f(|x-y|)\Big)\\
&+ \bigg(\int_{\{|z|\le 1\}} |c(x,z)-c(y,z)||z|^2\,\nu(dz) \bigg)\frac{f'(|x-y|)}{|x-y|}\\
&+ 2\bigg(\int_{\{|z|>1\}}\,\nu(dz) \bigg)\bigg[\sup_{x,z\in \R^d:
|z|> 1} c(x,z)\bigg]\|f\|_\infty.\end{align*}
\item[(2)] if additionally \eqref{e:ffvv} is satisfied, then
\begin{align*} \widetilde{L} f(|x-y|)\le  &\frac{1}{2}J(|x-y|) \Big(f(2|x-y|)- 2f(|x-y|)\Big)\\
&+ 4 \bigg(\int_{\{|z|\le 1\}} |c(x,z)-c(y,z)||z|\,\nu(dz) \bigg){f'(|x-y|)}\\
&+ 2\bigg(\int_{\{|z|>1\}}\,\nu(dz) \bigg)\bigg[\sup_{x,z\in
\R^d:|z|> 1} c(x,z)\bigg]\|f\|_\infty.\end{align*}\end{itemize}
\end{proposition}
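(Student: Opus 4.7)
The plan is straightforward: both assertions follow by combining the estimates for the two pieces of the decomposition $\widetilde L = \widetilde L_C + \widetilde L_R$ already derived in Section \ref{sec-pc}. I would first recall that $\widetilde L f(|x-y|) = \widetilde L_C f(|x-y|) + \widetilde L_R f(|x-y|)$ and then insert the bounds one by one.

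For the coupling part, the preliminary calculation shows that
$$\widetilde{L}_C f(|x-y|) = \tfrac{1}{2}\mu_{x,y,(x-y)_\kappa}(\R^d)\Big[f(|x-y|+\kappa\wedge|x-y|) + f(|x-y|-\kappa\wedge|x-y|) - 2f(|x-y|)\Big].$$
Since we restrict to $|x-y|\le \varepsilon_0\le\kappa$, we have $\kappa\wedge|x-y|=|x-y|$, and by the monotonicity $f'\ge 0$ together with $f(0)=0$, the inequality $f(2r)\le 2f(r)$ (which the authors verified via $f'(s+r)\le f'(s)$ for $r,s\in (0,\varepsilon_0]$, a consequence of $f''\le 0$) yields the first term on the right hand side, namely $\tfrac12 J(|x-y|)(f(2|x-y|)-2f(|x-y|))$, after using the definition of $J(r)$ in \eqref{refmea}.

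For the remainder part $\widetilde L_R$, I would split the $\nu$-integral into $\{|z|\le 1\}$ and $\{|z|>1\}$. On $\{|z|>1\}$, crude bounds $|f(|x-y\pm z|)-f(|x-y|)|\le 2\|f\|_\infty$ together with $c(x,z)-c(x,z)\wedge c(y,z)\le \sup_{x,z:|z|>1}c(x,z)$ yield the last (boundary) term common to both (1) and (2). On $\{|z|\le 1\}$, for assertion (1) I would invoke the estimate already established in case (i), which rests on the Taylor-type inequality $f(b)-f(a)\le f'(a)(b-a)$ (valid for $a,b\in(0,2]$ thanks to concavity) together with the identity $\langle x-y,z\rangle = \tfrac12(|x-y+z|^2-|x-y|^2-|z|^2)$ to produce the factor $\tfrac{f'(|x-y|)}{|x-y|}\cdot\tfrac{|z|^2}{2}$; summing the $c(x,z)$ and $c(y,z)$ contributions gives $|c(x,z)-c(y,z)||z|^2$. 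For assertion (2), under \eqref{e:ffvv}, case (ii) of the preliminary calculation gives $f(|x-y+z|)-f(|x-y|) - \tfrac{f'(|x-y|)}{|x-y|}\langle x-y,z\rangle \le 2 f'(|x-y|)|z|$, and the same splitting produces the $|c(x,z)-c(y,z)||z|$ term with constant $4$.

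Summing the $\widetilde L_C$ bound with the appropriate $\widetilde L_R$ bound yields (1) and (2) respectively. Since all the technical inequalities have already been carried out in Section \ref{sec-pc}, no step poses a real obstacle; the one item to be careful about is that the constants match (the factor $4$ in (2) comes from summing the $c(x,z)$ and $c(y,z)$ contributions with coefficient $2$ each). I would therefore write the proof as a one-paragraph assembly pointing back to the displayed inequalities established before the statement.
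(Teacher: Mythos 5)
Your proposal is correct and follows essentially the same route as the paper: the statement is precisely the assembly of the preliminary estimates of Section \ref{sec-pc}, i.e.\ the bound for $\widetilde L_C$ (using $\kappa\wedge|x-y|=|x-y|$, $f(0)=0$, $f(2r)\le 2f(r)$ and the nonpositivity of $f(2r)-2f(r)$ to pass to the infimum $J$) plus the case (i)/(ii) bounds for $\widetilde L_R$ with the small/large jump splitting. The constant bookkeeping you give (the factor $4$ in (2) and the factor $2$ in the large-jump term) matches the paper's estimates, so no gap remains.
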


\begin{remark} The estimates for $\widetilde{L} f(|x-y|)$ consist three terms. The first one comes from the operator $\widetilde{L}_C$,
which is a leading part for our purpose. Other two terms are due to
the operator $\widetilde{L}_R$.\end{remark}
\subsection{General results}
In the following, we present general results concerning the spatial regularity of semigroups.
\begin{theorem}\label{T:re} Assume that there is  a
nonnegative and $C_b([0,\infty))\cap C^3((0,\infty))$-function $\psi$ such that
\begin{itemize}
\item[(i)] $\psi(0)=0$, $\psi'\ge0$, $\psi''\le0$ and $\psi'''\ge0$ on $(0,2]$;
\item[(ii)] For any constants $c_1,c_2>0$,
\begin{equation}\label{e:fe1}\limsup_{r\to0}\bigg[J_\nu(r)r^2\psi''(2r)+c_1w(r)\psi'(r)r^{-1}+c_2\bigg]<0,\end{equation} where
\begin{equation}\label{e:ffgg1}J_\nu(r)= \inf_{z\in \R^d: |z|=r} \nu_{z}(\R^d)\end{equation} with $\nu_z(dz)$ defined in \eqref{e:not},  and
$$w(r)=\sup_{x,y\in \R^d: |x-y|=r} \bigg(\int_{\{|z|\le 1\}}
|c(x,z)-c(y,z)||z|^2\,\nu(dz) \bigg).$$
\end{itemize}Then, there are constants $C,\varepsilon_0>0$ such that for all $f\in B_b(\R^d)$ and $t>0$,
\begin{equation}\label{e:fefe1}\sup_{x\neq y}\frac{|P_tf(x)-P_tf(y)|}{\psi(|x-y|)}\le C\|f\|_\infty
\inf_{\varepsilon\in(0,\varepsilon_0]}\bigg[\frac{1}{\psi(\varepsilon)}+\frac{1}{t\lambda_\psi(\varepsilon)}\bigg],\end{equation}
where
$$\lambda_\psi(\varepsilon):=-\sup_{0<r\le \varepsilon}J_\nu(r)r^2\psi''(2r).$$

Assume additionally that \eqref{e:ffvv} holds. Then the conclusion above still holds, if \eqref{e:fe1} is replaced by
\begin{equation*}\label{e:fe11}\limsup_{r\to0}\bigg[J_\nu(r)r^2\psi''(2r)+c_1w_*(r)\psi'(r)+c_2\bigg]<0,\end{equation*} where
$$w_*(r)=\sup_{x,y\in \R^d: |x-y|=r} \bigg(\int_{\{|z|\le 1\}}
|c(x,z)-c(y,z)||z|\,\nu(dz) \bigg).$$  \end{theorem}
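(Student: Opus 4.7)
The plan is to exploit the coupling process $(X_t', Y_t')_{t \geq 0}$ constructed in Section \ref{section2}, whose generator before the coupling time $T$ is $\widetilde L$, together with the standard reduction
$$|P_t f(x) - P_t f(y)| \leq 2\|f\|_\infty \Pp^{(x,y)}(T > t).$$
The task then becomes a quantitative estimate of $\Pp^{(x,y)}(T > t)$, which I would derive by using $\psi(|X_\cdot' - Y_\cdot'|)$ as a Lyapunov function combined with an exit-time stopping argument.

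The first step is to promote Proposition \ref{P:eses}(1) to a sharp pointwise estimate. Since $\psi''' \geq 0$ on $(0,2]$ makes $\psi''$ non-decreasing, the representation
$$\psi(2r) - 2\psi(r) = \int_0^r \int_0^r \psi''(s+u)\,du\,ds$$
yields $\psi(2r) - 2\psi(r) \leq r^2 \psi''(2r) \leq 0$. Combined with $J(r) \geq c_* J_\nu(r)$ (from Lemma \ref{L:mmea} and $c \geq c_*$), Proposition \ref{P:eses}(1) gives
$$\widetilde L \psi(|x-y|) \leq \frac{c_*}{2} J_\nu(|x-y|)\, |x-y|^2\, \psi''(2|x-y|) + w(|x-y|)\frac{\psi'(|x-y|)}{|x-y|} + C_0 \|\psi\|_\infty,$$
where $C_0$ depends only on $\nu(\{|z|>1\})$ and $c^*$. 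Applying \eqref{e:fe1} with $c_1 = 4/c_*$ and $c_2 = 4C_0\|\psi\|_\infty/c_*$ then furnishes $\varepsilon_0 > 0$ such that for every $\varepsilon \in (0,\varepsilon_0]$ and every $x, y$ with $0 < |x-y| \leq \varepsilon$,
$$\widetilde L \psi(|x-y|) \leq \frac{c_*}{4} J_\nu(|x-y|)\, |x-y|^2\, \psi''(2|x-y|) \leq -\frac{c_*}{4} \lambda_\psi(\varepsilon).$$

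Next, for $|x-y| \leq \varepsilon \leq \varepsilon_0$, I would set $\tau_\varepsilon = \inf\{t > 0 : |X_t' - Y_t'| \geq \varepsilon\}$ and $\tau = \tau_\varepsilon \wedge T$. Dynkin's formula applied to $\psi(|\cdot - \cdot|)$ (which is smooth on $\{|x-y|>0\}$; the potential singularity at the origin is harmless because $\tau \leq T$ and $\psi(0) = 0$) yields
$$\Ee^{(x,y)} \psi(|X_{t \wedge \tau}' - Y_{t \wedge \tau}'|) \leq \psi(|x-y|) - \frac{c_*}{4}\lambda_\psi(\varepsilon)\,\Ee^{(x,y)}[t \wedge \tau].$$
Decomposing the left-hand side by the three scenarios at $t \wedge \tau$ (namely $\tau = T$ contributes $0$; $\tau_\varepsilon \leq t$ with $\tau_\varepsilon < T$ contributes at least $\psi(\varepsilon)$; the remainder is $\geq 0$) gives
$$\psi(\varepsilon)\, \Pp^{(x,y)}(\tau_\varepsilon \leq t,\, \tau_\varepsilon < T) + \frac{c_*}{4}\lambda_\psi(\varepsilon)\,\Ee^{(x,y)}[t \wedge \tau] \leq \psi(|x-y|).$$
Combining this with $\Ee[t \wedge \tau] \geq t\,\Pp(\tau > t)$ and the inclusion $\{T > t\} \subseteq \{\tau > t\} \cup \{\tau_\varepsilon \leq t,\, \tau_\varepsilon < T\}$ yields
$$\Pp^{(x,y)}(T > t) \leq \psi(|x-y|)\left[\frac{1}{\psi(\varepsilon)} + \frac{4}{c_* t\, \lambda_\psi(\varepsilon)}\right].$$
The same bound holds trivially when $\varepsilon < |x-y|$, since $\Pp(T > t) \leq 1 \leq \psi(|x-y|)/\psi(\varepsilon)$, and the regime $|x-y| > \varepsilon_0$ is covered by $|P_t f(x) - P_tf(y)| \leq 2\|f\|_\infty$ together with $\psi(|x-y|) \geq \psi(\varepsilon_0)$ and $\inf_{\varepsilon \in (0, \varepsilon_0]}[1/\psi(\varepsilon) + 1/(t\lambda_\psi(\varepsilon))] \geq 1/\psi(\varepsilon_0)$. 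Taking the infimum over $\varepsilon \in (0, \varepsilon_0]$ therefore delivers \eqref{e:fefe1}.

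The additional assertion under \eqref{e:ffvv} follows identically, with Proposition \ref{P:eses}(2) replacing Proposition \ref{P:eses}(1): the modified error term $4w_*(r)\psi'(r) + C_0\|\psi\|_\infty$ is absorbed by the same mechanism under the modified hypothesis involving $w_*(r)\psi'(r)$. The main technical hurdle lies in the calibration of the second paragraph: promoting the strictly negative limsup in \eqref{e:fe1} into a uniform pointwise bound $\widetilde L\psi \leq -(c_*/4)\lambda_\psi(\varepsilon)$ requires exploiting the divergence of $|J_\nu(r) r^2 \psi''(2r)|$ as $r \to 0$ to absorb both the $w$-term and the additive constant $C_0 \|\psi\|_\infty$ into a fixed fraction of the leading term, which is precisely what the freedom in the choice of $c_1, c_2$ in \eqref{e:fe1} affords.
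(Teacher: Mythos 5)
Your proposal is correct and follows essentially the same route as the paper: the heart of the argument is identical — the convexity bound $\psi(2r)-2\psi(r)\le r^2\psi''(2r)$ from $\psi'''\ge 0$, the lower bound $J(r)\ge c_* J_\nu(r)$, and the use of \eqref{e:fe1} with suitably chosen $c_1,c_2$ to turn Proposition \ref{P:eses} into the uniform drift estimate $\widetilde L\psi(|x-y|)\le -c\,\lambda_\psi(\varepsilon)$ for $0<|x-y|\le\varepsilon\le\varepsilon_0$. The only difference is that where the paper then simply invokes \cite[Proposition 4.1]{MW}, you reprove that step by hand (coupling inequality, Dynkin's formula at $\tau_\varepsilon\wedge T$, exit-time decomposition); this is fine, though note it tacitly uses that $\psi$ stays above $\psi(\varepsilon)$ at the overshoot (monotonicity beyond $(0,2]$) and an approximation argument to apply Dynkin to the non-$C_b^2$ function $\psi(|x-y|)$ — technicalities that the cited proposition is meant to absorb.
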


\begin{proof} First, by \eqref{e:fe1}, we have
$$\limsup_{r\to 0}J_\nu(r)r^2\psi''(2r)<0.$$
Due to $\psi'''\geq 0$ on $(0,2]$, it holds for any $0<r\le 1$ that
  $$2\psi(r)-\psi(2r)=-\int_0^r\int_s^{r+s} \psi''(u)\,du\,ds\ge -\psi''(2r)
  r^2.$$
  Let $0<\varepsilon_0<\kappa\le 1$ and
  $\varepsilon\in(0,\varepsilon_0]$. For any $x,y\in \R^d$ with
  $0<|x-y|\le \varepsilon$, according to Proposition
  \ref{P:eses}(1), we find that
  \begin{align*}\widetilde{L}\psi(|x-y|)&\le  \frac{1}{2}J(|x-y|)\psi''(2|x-y|)|x-y|^2+ c_1w(|x-y|)\frac{\psi'(|x-y|)}{|x-y|}+c_2,\end{align*}
  where in the inequality above we used the facts that $c(x,z)$ is
  bounded from above and $\psi$ is bounded.

  Since $c(x,z)$ is bounded from below, there is a constant $c_3>0$
  such that for all $r>0$,
  $J(r)\ge c_3J_\nu(r).$ This further yields that for any $x,y\in \R^d$ with
  $0<|x-y|\le \varepsilon$
  \begin{align*}\widetilde{L}\psi(|x-y|)&\le  \frac{c_3}{2}J_\nu(|x-y|)\psi''(2|x-y|)|x-y|^2+ c_1w(|x-y|)\frac{\psi'(|x-y|)}{|x-y|}+c_2\\
  &\le c_4J_\nu(|x-y|)\psi''(2|x-y|)|x-y|^2\\
  &\le -c_4 \lambda_\psi(\varepsilon) ,\end{align*} where in the
  second inequality we used \eqref{e:fe1}.

  Having the inequality above at hand, we can obtain the first desired assertion by \cite[Proposition 4.1]{MW}.
The second desired assertion follows from the argument above and Proposition \ref{P:eses}(2). \end{proof}

From Theorem \ref{T:re}, we can further deduce the time-space
regularity of semigroups. In details, let $(X_t^x)_{t\ge0}$ be the
strong Markov process associated with the operator $L$ starting from
$x$. Under assumptions of Theorem \ref{T:re}, for any $0<s<t$,
$x,y\in \R^d$ and  $f\in B_b(\R^d)$,
\begin{align*}|P_sf(x)-P_tf(y)|=&|\Ee^xf(X_s)-\Ee^yf(X_t)|=|\Ee^xf(X_s)-\Ee^y \Ee^{X_{t-s}^y} f(X_s)|\\
\le&\Ee^y|\Ee^xf(X_s)-\Ee^{X_{t-s}^y} f(X_s)|\\
\le & C \|f\|_\infty
\inf_{\varepsilon\in(0,\varepsilon_0]}\bigg[\frac{1}{\psi(\varepsilon)}+\frac{1}{t\lambda_\psi(\varepsilon)}\bigg]
\Ee^y\psi(|x-X_{t-s}^y|),\end{align*} where in the second equality
we used the Markov property, and the last inequality follows from
\eqref{e:fefe1}. In order to estimate $\Ee^y\psi(|x-X_{t-s}^y|)$,
one can refer to \cite{Kul} for the recent study of moments
estimates for L\'evy-type processes. The details are omitted here.

\subsection{Proofs}
To prove Theorem \ref{main}, we also need the following lemma.

\begin{lemma} Suppose that there are constants $\alpha\in (0,2)$ and $c_0>0$ such that
$$\nu(dz)\ge \frac{c_0}{|z|^{d+\alpha}}\I_{V_\xi}(dz),$$ where $V_\xi$ is defined by \eqref{e:mea}. Then, there are constants $c_1>0$ and $r_0\in(0,1)$ such that for all $0<r\le r_0$,
$$J_\nu(r)\ge c_1r^{-\alpha},$$ where $J_\nu$ is defined by \eqref{e:ffgg1}.
\end{lemma}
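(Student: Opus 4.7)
The plan is to pass from $\nu$ to the explicit lower-bound measure and then exhibit a set on which both densities (of the measure and its translate) are simultaneously of order $r^{-d-\alpha}$, with Lebesgue volume of order $r^d$. Set $\nu_0(dz):=c_0|z|^{-d-\alpha}\I_{V_\xi}(z)\,dz$, so that $\nu\ge\nu_0$ as measures, and translation preserves this, $\delta_u*\nu\ge\delta_u*\nu_0$ for every $u\in\R^d$. Monotonicity of the operation $\wedge$ in each argument---immediate from the characterisation $(\mu_1\wedge\mu_2)(A)=\inf_B\{\mu_1(A\cap B)+\mu_2(A\cap B^c)\}$---then yields
\[
\nu\wedge(\delta_u*\nu)\ \ge\ \nu_0\wedge(\delta_u*\nu_0).
\]
Since $\nu_0$ has the Lebesgue density $n_0(z):=c_0|z|^{-d-\alpha}\I_{V_\xi}(z)$, the right-hand side is the measure with density $\min\bigl(n_0(z),n_0(z-u)\bigr)$, and it therefore suffices to show that $\int_{\R^d}\min\bigl(n_0(z),n_0(z-u)\bigr)\,dz\ge c_1 r^{-\alpha}$ uniformly for $|u|=r$ small.

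To build the good region, fix $\delta'\in(\delta,1)$, for instance $\delta':=(1+\delta)/2$, and set $C:=(1+\delta)/(\delta'-\delta)=2(1+\delta)/(1-\delta)$. For $|u|=r$ put
\[
E:=\bigl\{z\in\R^d:\ Cr\le|z|\le 2Cr,\ \langle z,\xi\rangle\ge \delta'|z|\bigr\}.
\]
Since $\delta'>\delta$ and $|z|\le 2Cr\le 1$ as soon as $r\le r_0:=1/(2C)$, every $z\in E$ lies in $V_\xi$. Moreover, for any $z\in E$ and any $u$ with $|u|=r$,
\[
\langle z-u,\xi\rangle\ \ge\ \delta'|z|-r\ \ge\ \delta(|z|+r)\ \ge\ \delta|z-u|,
\]
where the middle inequality is exactly the defining condition $|z|\ge Cr$; combined with $|z-u|\le(2C+1)r\le 1$, this gives $z-u\in V_\xi$ as well. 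Consequently, on $E$ both $n_0(z)$ and $n_0(z-u)$ are bounded below by $c_0\bigl((2C+1)r\bigr)^{-d-\alpha}$.

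Finally, $E$ is an annular sector whose Lebesgue volume equals $\sigma_{\delta',d}\cdot\bigl((2Cr)^d-(Cr)^d\bigr)/d$, where $\sigma_{\delta',d}>0$ is the surface measure of the spherical cap $\{\omega\in\Ss^{d-1}:\langle\omega,\xi\rangle\ge\delta'\}$; in particular $|E|\ge c_d\, r^d$ for a constant depending only on $d$ and $\delta'$. Multiplying the pointwise bound by $|E|$ yields $\int_E\min(n_0(z),n_0(z-u))\,dz\ge c_1 r^{-\alpha}$ uniformly in $|u|=r$, which proves the claimed estimate on $J_\nu(r)$. The only subtle point is the tuning of $\delta'$ and $C$ so that a translation of length $r$ cannot push $z$ out of the cone $V_\xi$ (a ``shrunken cone'' argument); the monotonicity step for $\wedge$ is routine but worth flagging, since $\nu$ itself need not possess a density.
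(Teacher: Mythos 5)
Your proof is correct and takes essentially the same route as the paper's: shrink the cone aperture from $\delta$ to $\delta'=(1+\delta)/2$ and restrict to an annulus of radius comparable to $r$ so that both $z$ and $z-u$ lie in $V_\xi$, whence the mass of the shrunken cone yields the $r^{-\alpha}$ bound (the paper integrates the density over the full range from $\sim r$ up to a fixed constant and splits into the cases $\langle u,\xi\rangle\ge 0$ and $\langle u,\xi\rangle\le 0$, while your Cauchy--Schwarz estimate and single dyadic shell handle all $u$ with $|u|=r$ at once, and your explicit monotonicity step for $\wedge$ makes precise what the paper uses implicitly since $\nu$ need not have a density). The only nit is that $r_0$ should be taken as $1/(2C+1)$ rather than $1/(2C)$ so that $|z-u|\le(2C+1)r\le 1$, a harmless adjustment of the constant.
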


\begin{proof} For any $z\in \R^d$, let $z=(z_1, z_2,\cdots,z_d)$.
Without loss of generality, we may and can assume that $\xi=e_1=(1,0,\cdots,0)$.
Denote by $$q(z)=\frac{c_0}{|z|^{d+\alpha}}\I_{\{z_1\ge \delta |z|, |z|\le 1\}}(z),\quad z\in \R^d.$$ Then, for any $x,z\in \R^d$,
\begin{align*} q(z)\wedge q(x+z)=&\I_{\{z_1\ge \delta |z|, z_1+x_1\ge \delta |x+z|, |z|\le 1, |x+z|\le 1\}}\left( \frac{c_0}{|z|^{d+\alpha}}\wedge \frac{c_0}{|z+x|^{d+\alpha}}\right).\end{align*}

In the following, we first suppose that $x_1\ge0$. Hence, for any $x\in \R^d$ with $|x|$ small enough,
\begin{align*}&\int q(z)\wedge q(x+z)\,dz\\
&\ge \int_{\{z_1\ge (1+\delta)|z|/2 , 2\delta|x|/(1-\delta)\le |z|\le \delta/(1+\delta)\}}\frac{c_0}{(|x|+|z|)^{d+\alpha}}\,dz\\
&\ge c_1 \int_{\{z_1\ge (1+\delta)|z|/2 , 2\delta|x|/(1-\delta)\le |z|\le \delta/(1+\delta)\}}\frac{1}{|z|^{d+\alpha}}\,dz\\
&\ge c_1 \int_{\{(1+\delta)(z_2^2+\cdots+z_d^2)^{1/2}/(1-\delta)\le z_1\le \delta/[2(1+\delta)], 2\delta|x|/(1-\delta)\le (z_2^2+\cdots+z_d^2)^{1/2}\le \delta/[2(1+\delta)]\}}\frac{1}{|z|^{d+\alpha}}\,dz\\
&\ge c_2\int_{2\delta|x|/(1-\delta)}^{c_3} \,dr \int_{ (1+\delta) r/(1-\delta)}^{c_4} \frac{r^{d-2}}{(z_1+r)^{d+\alpha}}\,dz_1\\
&\ge c_5 |x|^{-\alpha}.
\end{align*}

If $x_1\le 0$, then, following the argument above, we have
\begin{align*}&\int q(z)\wedge q(x+z)\,dz\\
&\ge \int_{\{z_1\ge (1+\delta) |z|/2, 2(1+\delta)|x|/(1-\delta)\le |z|\le (1+\delta)/(3+\delta)\}}\frac{c_0}{(|x|+|z|)^{d+\alpha}}\,dz\\
&\ge c_6 |x|^{-\alpha}.
\end{align*}

Combining all the estimates above, we have obtained the desired assertion.
\end{proof}

We are now in a position to give

\begin{proof}[Proof of Theorem $\ref{main}$]
We will apply Theorem \ref{T:re}. For (1), noticing that
$\alpha_2\in(1,2)$ too, we choose $\phi(r)=r(1-\log^{-\theta}(1/r))$
for $r>0$ small enough, where $\theta$ is given in
\eqref{e:ffeesss}. For (2), we take $\phi(r)=r \log^\theta(1/r)$ for
$r>0$ small enough, where $\theta>0$ if $\alpha_1>1$, and $\theta>1$
if $\theta_1=1$. For (3), since $\alpha_2\in (0,1)$, \eqref{e:ffvv}
holds. Then, we can take $\phi(r)=r^{\theta}$ for $r>0$ small
enough. Therefore, with functions $\phi$ above, the desired
assertion follows from Theorem \ref{T:re}.
\end{proof}

\begin{proof}[Proof of Corollary $\ref{C:cor1}$] Let $u$ be a
bounded harmonic function on $\R^d$. Then, for any $x,y\in \R^d$,
$$u(x)-u(y)=P_1u(x)-P_1u(y).$$ This along with Theorem \ref{main}
immediately yields the desired assertion. \end{proof}

\begin{proof}[Proof of Corollary $\ref{C:cor2}$] Since the proof is mainly based on that of \cite[Theorem 2.1]{RS}, we only point out necessary modifications here. For simplicity,
we just consider the case (3) in Theorem \ref{main}.  For any
$\rho\ge1$, let $v(x)=\rho^{-\beta} u(\rho x)$. Then, it is obvious
that $P_tv(x)=v(x)$ for all $x\in \R^d$ and $t>0$; moreover,
$\|v\|_{L^\infty(B_R(0),dx)}\le c R^\beta$ with the same constant as
$u$.

For any $M>0$, let $v_M(x)=v(x)\I_{\{|x|\le M\}}$. Then, for any
$x,y\in \R^d$,  according to Theorem \ref{main}(3), we have
\begin{align*}|v(x)-v(y)|=&|P_1v(x)-P_1v(y)|\\
\le
&|P_1v_M(x)-P_1v_M(y)|+|P_1(v-v_M)(x)|+|P_1(v-v_M)(y)|\\
\le & c_1|x-y|^\theta
M^\beta\\
&+|P_1v_M(x)-P_1v_M(y)|+|P_1(v-v_M)(x)|+|P_1(v-v_M)(y)|.\end{align*}

On the other hand, under \eqref{e:mee} (in particular, $\nu(dz)\le
\frac{c_2}{|z|^{d+\alpha_2}}\,dz$), for any $\varepsilon>0$ and the
function $h(x)=(1+|x|^2)^{(\alpha_2-\varepsilon)/2}$, we can check
that there is a constant $c_3>0$ such that for all $x\in \R^d$,
$$Lh(x)\le c_3h(x),$$ which yields that $P_1 h(x)\le c_4 h(x).$
Thus, for all $x\in B_1(0)$
\begin{align*}|P_1(v-v_M)(x)|&\le \int_{\{|z|>M\}} |v|(z)\,P_1(x,dz)\\
&\le \left(\int_{\{|z|>M\}} h(z)\,P_1(x,dz)\right) \left(\sup_{z\in
M} \frac{ |v|(z)}{h(z)}\right)\\
&\le c_5 M^{-(\alpha_2-\beta-\varepsilon)}.
\end{align*} In particular, taking $\varepsilon=(\alpha_2-\beta)/2$,
we arrive at that for all $x\in B_1(0)$,
$$|P_1(v-v_M)(x)|\le c_5 M^{-\varepsilon}.$$

Combining with all the estimates above, for any $x,y\in B_1(0)$,
$$|v(x)-v(y)|\le c_6\left(|x-y|^\theta
M^\beta-M^{-\varepsilon}\right).$$ Letting
$M=|x-y|^{-\theta/(\beta+\varepsilon)}$, we get that for any $x,y
\in B_1(0)$,
$$|v(x)-v(y)|\le c_7 |x-y|^\gamma,$$ where $\gamma=\varepsilon
\theta/(\beta+\varepsilon).$ This shows the same conclusion as
\cite[(2.9)]{RS}. Furthermore, one can follow  the argument of
\cite[Theorem 2.1]{RS} to prove the desired assertion.
  \end{proof}

Next, we present the
\begin{proof}[Proof of Theorem $\ref{main0}$] Let $L$ be the operator given by \eqref{levytype}, and let $L_\mu:=L_*-L$.
Since $L_*$ is a linear operator, we can split the construction of
coupling operator $L_*$ into those of $L$ and $L_\mu$. For $L$, we
still use the coupling operator $\widetilde L$ defined by
\eqref{proofth24}. For $L_\mu$, we define a coupling operator $
\widetilde{L}_\mu$ as follows: for any $h\in C_b^2(\R^{2d})$,
 \begin{align*}
    \widetilde{L}_\mu h(x,y) &= \int\Big( h(x+z,y+z)-h(x,y)-\langle\nabla_xh(x,y), z\rangle \I_{\{|z|\le 1\}}\cr
    &\qquad\qquad-\langle\nabla_yh(x,y), z\rangle \I_{\{|z|\le 1\}}\Big)\,\mu(x,dz)\wedge \mu(y,dz)\\
    &\quad +\int\Big( h(x+z,y)-h(x,y)-\langle\nabla_xh(x,y), z\rangle \I_{\{|z|\le 1\}}\Big)\\
    &\qquad\qquad\qquad \times \,\left(\mu(x,dz)- \mu(x,dz)\wedge \mu(y,dz)\right)\\
    &\quad +\int\Big( h(x,y+z)-h(x,y)-\langle\nabla_yh(x,y), z\rangle \I_{\{|z|\le 1\}}\Big)\\
       &\qquad\qquad\qquad \times \,\left(\mu(y,dz)- \mu(x,dz)\wedge \mu(y,dz)\right).
  \end{align*} Then, we can follow the proof of Proposition \ref{P:coup1} and obtain that $\widetilde L_*:=\widetilde L+ \widetilde{L}_\mu$ is a coupling operator of $L$. Furthermore, using the assumption that for any $h\in C_b^2(\R^d)$, the function $x\mapsto \int_{\R^d} h(z) \frac{|z|^2}{1+|z|^2}\,\mu(x,dz)$ is continuous, and repeating the argument in Section \ref{section2}, we can prove the existence of coupling process associated with the coupling operator $\widetilde L_*$ above.

Next, for any $0\le f\in C_b([0,\infty))\cap C^2((0,\infty))$ such that $f(0)=0$, and $f'\ge0$ and $f''\le0$ on $(0,2]$, we will adopt the arguments of \eqref{proofth2544}  and \eqref{proofth25441} to obtain some similar estimates about $\widetilde{L}_\mu f(|x-y|)$. With these at hand and estimates for $\widetilde L$ in Proposition \ref{P:eses}, one can obtain the desired conclusion by using Theorem \ref{T:re} and following the proof of Theorem \ref{main} line by line.
\end{proof}


\medskip

\noindent \textbf{Acknowledgements.}  The research is supported by
National Natural Science Foundation of China (No.\ 11522106), Fok
Ying Tung Education Foundation (No.\ 151002), and  the Program for
Nonlinear Analysis and Its Applications (No. IRTL1206).


\begin{thebibliography}{99}
\bibitem{AK} Abels, H. and Kassmann, M.: The Cauchy problem and the martingale problem for integro-differential operators with non-smooth kernels,
\emph{Osaka J. Math. \bf 46} (2009), 661--683.

\bibitem{BCI2} Barles, G., Chasseigne, E., Ciomaga, A. and Imbert, C.: Lipschitz regularity of solutions for mixed
integro-differential equations, \emph{J. Differential
Equations \bf 252} (2012), 6012--6060.

\bibitem{BCI} Barles, G., Chasseigne, E. and Imbert, C.: H\"{o}lder continuity of solutions of second-order non-linear elliptic integro-differential equations, \emph{J. Eur. Math. Soc. \bf 13} (2011), 1--26.



\bibitem{Bass1}
Bass, R.F.: Uniqueness in law for pure jump type Markov processes,
\emph{Probab.\ Theory Relat.\ Fields} \textbf{79}
(1988), 271--287.



\bibitem{BK1}
Bass, R.F. and Kassmann, M.: Harnack inequalities for non-local
operators of variable order, \emph{Trans. Amer. Math. Soc.}
\textbf{357} (2005), 837--850.

\bibitem{BK2}
Bass, R.F. and Kassmann, M.: H\"{o}lder continuity of harmonic
functions with respect to operators of variable order, \emph{Comm.
PDE} \textbf{30} (2005), 1249--1259.

\bibitem{BL}
Bass, R.F. and Levin, D.A.: Harnack inequality for jump processes,
\emph{Potential Anal.} \textbf{17} (2002), 375--388.

\bibitem{BT} Bass, R.F. and Tang, H.L.: The martingale problem for a class of stable-like processes,
 \emph{Stochastic Process. Appl.} \textbf{119} (2009), 1144--1167.

\bibitem{BSW}
B\"{o}ttcher, B., Schilling, R.L.\ and Wang, J.: \emph{L\'{e}vy-Type Processes: Construction, Approximation and Sample Path Properties}, Lecture Notes in Mathematics, vol. \textbf{2099}, L\'{e}vy Matters III, Springer,  Berlin 2014.

\bibitem{BCD} Budhiraja, A., Chen, J. and Dupuis, P.: Large deviations for stochastic partial differential equations driven by a Poisson random measure,
\emph{Stoch. Proc. Appl. \bf 123} (2013), 523--560.



\bibitem{CS} Caffarelli, L. and Silvestre, L.: Regularity theory for fully nonlinear integro-differential equations,
\emph{Comm. Pure Appl. Math. \bf 62} (2009), 597--638.

\bibitem{CS1} Caffarelli, L. and Silvestre, L.: The Evans-Krylov theorem for nonlocal fully nonlinear equations,
\emph{Ann. of Math. \bf 174} (2011), 1163--1187.



\bibitem{CZ} Chen, Z.-Q. and Zhang, X.: Uniqueness of stable-like processes,
arXiv:1604.02681.

\bibitem{KRS}
Kassmann, M., Rang, M. and Schwab, R.W.: Integro-differential equations with nonlinear directional dependence, \emph{Indiana Univ. Math. Journal \bf 63} (2014), 1467--1498.

\bibitem{K} Komatsu, T.: On the martingale problem for generators of stable processes with perturbations,
\emph{Osaka J. Math. \bf 21} (1984), 113--132.

\bibitem{Kul} K\"{u}hn, F.: Existence and estimates of moments for L\'evy-type processes,
\emph{Stoch. Proc. Appl. \bf 127} (2017), 1018--1041.

\bibitem{KU}
Kurtz, T.G.: Equivalence of stochastic equations and martingale
problems, in:\ Crisan, D.\ (ed.): \emph{Stochastic Analysis in
$2010$}, pp.\ 113--130, Springer, London, 2011.

\bibitem{KP}
Kurtz, T.G. and Protter, P.E.: Weak convergence of stochastic
integrals and differential equations. II. Infinite-dimensional case,
(Montecatini Terme, 1995), \emph{Lecture Notes in Math.}, vol. {\bf
1627}, pp.\ 197--285, Springer, Berlin, 1996.

\bibitem{LM}
Lepeltier, J.-P.\ and Marchal, B.: Probl\`{e}me des martingales et
\'{e}quations diff\'{e}rentielles stochastiques associ\'{e}es \`{a}
un op\'{e}rateur int\'{e}gro-diff\'{e}rential, \emph{Ann.\ Inst.\
Henri Poincar\'e} \textbf{12} (1976), 43--103.


\bibitem{MW} Liang, M.  and Wang, J.: Gradient estimates and ergodicity for SDEs driven multiplcative L\'evy noises via coupling, arXiv:1801.05936

\bibitem{LW14}
Luo, D. and Wang, J.: Coupling by reflection and H\"{o}lder
regularity for non-Local operators of variable order, to appear in \emph{Trans. Amer. Math. Soc.}

\bibitem{Lwcw} Luo, D.  and Wang, J.: Refined basic couplings and Wasserstein-type distances for SDEs with L\'evy noises, arXiv:1604.07206


\bibitem{Ja}
Jacob, N.: \emph{Pseudo-differential Operators and Markov Processes,} Vol. 1 - Vol. 3, Imperial College Press, London 2001 - 2005.


\bibitem{Jp} Jin, P.: Well-posedness of the martingale problem for non-local perturbations of L\'evy-type generators, arXiv:1708.04467

\bibitem{Jp2} Jin, P.: Uniqueness in law for stable-like processes of variable order, arXiv:1802.01151

\bibitem{MP92} Mikulevicius, R. and Pragarauskas, H.: On the martingale problem associated with nondegenerate
L\'evy operators, \emph{Lith. Math. J. \bf 32} (1992), 297--311.


\bibitem{MP14a} Mikulevicius, R. and Pragarauskas, H.: On the Cauchy problem for integro-differential
operators in H\"{o}lder classes and the uniqueness of the martingale
problem, \emph{Potential Anal. \bf40} (2014), 539--563.

\bibitem{MP14b} Mikulevcius, R. and Pragarauskas, H.: On the Cauchy problem for integro-differential
operators in Sobolev classes and the martingale problem, \emph{J. Diff. Eq. \bf 256} (2014),
1581--1626.


\bibitem{RS} Ros-Oton, X. and Serra, J.: Regularity theory for general stable operators, \emph{J.
Diff. Eq. \bf 260} (2016), 8675--8715.


\bibitem{RU}
Situ, R.: \emph{Theory of Stochastic Differential Equations with Jumps and Applications: Mathematical and Analystical Techniques with Applications to Eigineering},
Springer, New York 2005.

\bibitem{St}
Stroock, D.: Diffusion processes associated with L\'{e}vy
generators, \emph{Z. Wahrscheinlichkeitstheorie verw. Gebiete}
\textbf{32} (1975), 209--244.

\bibitem{Wang14}
Wang, J.: On the existence and explicit estimates for the coupling
property of L\'{e}vy processes with drift, \emph{J. Theor. Probab.}
\textbf{27} (2014), 1021--1044.

\bibitem{Wang15}
Wang, J.: $L^p$-Wasserstein distance for stochastic differential
equations driven by L\'{e}vy processes,
\emph{Bernoulli} \textbf{22} (2016), 1598--1616.

\bibitem{X} Xie, L.: Singular SDEs with critical non-local and non-symmetric L\'evy type generator,
\emph{Stoch. Proc. Appl. \bf 127} (2017), 3792--3824.

\bibitem{XX} Xie, L. and Xu, L.: Irregular stochastic differential equations driven by a family of Markov processes,
arXiv:1610.07248

\end{thebibliography}
\end{document}